\renewcommand{\l}{\left}
\renewcommand{\r}{\right}
\newcommand{\maru}[1]{{\ooalign{\hfil#1\/\hfil\crcr
\raise.167ex\hbox{\mathhexbox20D}}}}
\newcommand{\ruby}[2]{%
 \leavevmode
 \setbox0=\hbox{#1}%
 \setbox1=\hbox{\tiny #2}%
 \ifdim\wd0>\wd1 \dimen0=\wd0 \end{lemma}se \dimen0=\wd1 \fi
 \hbox{%
   \kanjiskip=0pt plus 2fil
   \xkanjiskip=0pt plus 2fil
   \vbox{%
     \hbox to \dimen0{%
       \tiny \hfil#2\hfil}%
     \nointerlineskip
     \hbox to \dimen0{\mathstrut\hfil#1\hfil}}}}
\newcommand{\la}{\langle}
\newcommand{\ra}{\rangle}
\newcommand{\Z}{\mathbb{Z}}
\newcommand{\C}{\mathbb{C}}
\newcommand{\Q}{\mathbb{Q}}
\newcommand{\aut}{\mathrm{Aut}\,}
\newcommand{\Aut}{\mathrm{Aut}\,}
\renewcommand{\hom}{\mathrm{Hom}}
\newcommand{\be}{\beta}
\newcommand{\al}{\alpha}
\newcommand{\Span}{\mathrm{Span}}
\makeatletter \@addtoreset{equation}{section}
\theoremstyle{plain}
\newtheorem{theorem}{Theorem}[section]
\newtheorem{proposition}[theorem]{Proposition}
\newtheorem{lemma}[theorem]{Lemma}
\theoremstyle{definition}
\newtheorem{definition}[theorem]{Definition}
\theoremstyle{remark}
\newtheorem{remark}[theorem]{Remark}
\numberwithin{equation}{section}
\title[Fourvolutions and orbifold of lattice VOAs]{Fourvolutions  and automorphism groups of orbifold  lattice vertex operator algebras}
 \subjclass[2010]{Primary  17B69}
 \keywords{lattice vertex operator algebra, automorphism group.}
\author{Hsian-Yang Chen$^{*}$}
  \address[H.Y.  Chen]{ National University of Tainan, Tainan  70005, Taiwan}
\email{hychen@mail.nutn.edu.tw} 
 \thanks{$*$Corresponding author}
 \thanks{H.Y. Chen is supproted by MOST grant 109-2115-M-024 -005 -MY2 of Taiwan}
\author{Ching Hung Lam} %
\address[C. H. Lam] {Institute of Mathematics, Academia Sinica, Taipei 10617, Taiwan} 
\email{chlam@math.sinica.edu.tw} 
\date{}
\thanks{C.H. Lam is supported by a research grant AS-IA-107-M02 of Academia Sinica  and MOST grant  107-2115-M-001 -003 -MY3  of Taiwan}
\newcommand{\sfr}[2]{\leavevmode\kern-.1em
  \raise.5ex\hbox{\the\scriptfont0 #1}\kern-.1em
  /\kern-.15em\lower.25ex\hbox{\the\scriptfont0 #2}}
\begin{document}

\begin{abstract}
Let $L$ be an even positive definite lattice with no roots, i.e., $L(2)=\{x\in L\mid (x|x)=2\}=\emptyset$. Let 
$g\in O(L)$ be an isometry of order $4$ such that $g^2=-1$ on $L$. In this article, we determine the full automorphism group of the orbifold vertex operator algebra $V_L^{\hat{g}}$.  As our main result, we show that 
$\aut(V_L^{\hat{g}})$ is isomorphic to  $N_{\Aut(V_L)}(\langle \hat{g}\rangle)/ \langle\hat{g}\rangle $ unless $L\cong \sqrt{2}E_8$ or $BW_{16}$.  
\end{abstract}

\maketitle


\section{Introduction}
Let $V$ be a vertex operator algebra (abbreviated  as VOA) and let $g \in \Aut V$  be  a finite automorphism of $V$. The fixed-point subalgebra $$V^g = \{ v \in V\mid  g v = v \}$$ is often 
called an orbifold subVOA. Our main propose is to study the full automorphism group of the orbifold VOA $V^g$. It is clear that the normalizer $N_{\aut(V)}(\langle g\rangle)$ of  $g$ stabilizes  the orbifold VOA $V^g$ and it induces a group homomorphism $f:N_{\aut(V)}(\langle g\rangle)/ \langle g\rangle \to \aut(V^g)$. For generic cases, $\aut(V^g)$ is often isomorphic to $N_{\aut(V)}(\langle g\rangle)/ \langle g\rangle$ but $\aut(V^g)$ may be strictly bigger than  $N_{\aut(V)}(\langle g\rangle)/ \langle g\rangle$. We call an automorphism $s\in \aut(V^g)$ an extra automorphism if $s$ is not in the image of $f$, or equivalently, $s$  is not a restriction from an automorphism in $\Aut(V)$.   
When $V=V_L$ is a lattice VOA and $g$ is a lift of the $(-1)$ isometry of $L$, the full automorphism group of $V_L^g =V_L^+$ has been determined in \cite{Sh04,Sh06}.  In particular, it was shown in \cite{Sh04,Sh06} that the VOA $V_L^+$ contains an extra automorphism if and only if $L$ can be constructed by Construction B from a binary code (cf. Section \ref{s:4.1}). The main method is to study the orbit
of $V_{L}^- =\{v\in V_L\mid gv=-v\}$ under the conjugate actions of $\Aut(V_{L}^+)$. Using a similar method, the automorphism groups of certain orbifold vertex operator algebras  associated with some coinvariant lattice of the Leech lattice are also studied in \cite{Lam1,Lam2}. 
In addition, a sufficient condition for the existence of extra automorphisms was discussed in \cite{Lam1,LS}. The key idea is  to generalize  the triality automorphism defined in \cite{FLM} (see also Section \ref{s:4.1} and \eqref{extra_auto}) to other root lattices of type $A$.   
These  results were also used in \cite{BLS,BLS2} to determine the automorphism groups of certain orbifold vertex operator algebras  associated with the Leech lattice and to determine the full automorphism groups of certain holomorphic VOAs of central charge $24$.  
One interesting question is if the sufficent condition mentioned in \cite{LS} is also a necessary condition for the existence of extra automorphisms?  
When $|g|=p$ is a prime and $L$ is rootless, it was shown in \cite{LS} that a cyclic orbifold $V_L^{\hat{g}}$ contains extra automorphisms if and only if the rootless even lattice $L$ can be constructed by Construction B from a code over $\Z_p$ or is isometric to the coinvariant lattice of the Leech lattice associated with a certain isometry of order $p$. 

In this article, we will continue our study of the full automrohism groups of certain cyclic orbifold of lattice vertex operator algebras and consider a case that $g\in O(L)$ has order $4$ and $g^{2}=-1$ on $L$. Such an isometry is called a fourvolution in \cite{Gr2tod}.  As our main result, we will show that 
$\aut(V_L^{\hat{g}})$ is isomorphic to  $N_{\Aut(V_L)}(\langle \hat{g} \rangle)/ \langle \hat{g}\rangle$ for any fourvolution $g\in O(L)$ unless $L\cong \sqrt{2}E_8$ or $BW_{16}$. When $L\cong \sqrt{2}E_8$ or $BW_{16}$, we also have 
$\aut(V_L^{\hat{g}})\cong  C_{\Aut(V_L^+)}(\bar{g})/ \langle \bar{g}\rangle$,    
where $\bar{g}$ denotes the restriction of $\hat{g}$ on $V_L^+$.

We shall note that for the cases studied in \cite{BLS,BLS2,Lam1,Lam2,LS}, $g$ often acts as an $n$-cycle on some root lattices of type $A_{n-1}$ and thus all $n$-th roots of unity (except $1$) appear as  eigenvalues of $g$, where $n=|g|$. Nevertheless, for a fourvolution $g\in O(L)$, $g$ has only two eigenvalues $\sqrt{-1}$ and $-\sqrt{-1}$ and $-1$ is not an eigenvalue of $g$. Therefore, the condition discussed in \cite{LS} would never be satisfied when $g$ is a fourvolution. We hope that our study of fourvolutions and the automorphism groups of the corresponding orbifold VOA will provide some hints for the general cases.

\section{Preliminaries}
\subsection{Lattice VOAs and their automorphism groups}
We first recall the structure of the automorphism group of a lattice VOA $V_L$.   
Let $L$ be an even lattice with the (positive-definite) bilinear form $( \ | \ )$.
We denote the group of isometries of $L$ by $O(L)$, i.e, 
\[
O(L)=\{ g\in GL(L)\mid (gx|gy)=(x|y) \text{ for all } x,y\in L\}.
\]
Let $\hat{L}=\{\pm e^\al\mid \al\in L\}$ be  a 
central extension of $L$ by $\pm 1$
such that $e^{\al} e^\be = (-1)^{(\al |\be)} e^{\be}e^\al$.
Let $\aut(\hat{L})$ be the automorphism group of $\hat{L}$ as a group.
For $g\in\aut(\hat{L})$, let $\bar{g}$ be the map $L\to L$ defined by $g(e^\alpha)\in\{\pm e^{\bar{g}(\alpha)}\}$.
Let $O(\hat{L})=\{g\in\aut(\hat{L})\mid \bar{g}\in O(L)\}$.
Then by \cite[Proposition 5.4.1]{FLM},  we have an exact sequence
\[
  1 \to \hom (L,\Z/2\Z) \to  O(\hat{L})\to  O(L) \to  1.
\]
It is known that $O(\hat{L})$ is a subgroup of $\aut(V_L)$ (cf.~loc.~cit.).
Let
$$
  N(V_L) = \l\la \exp(a_{(0)}) \mid a\in (V_L)_1 \r\ra
$$
be the normal subgroup of $\Aut(V_L)$ generated by the inner automorphisms $\exp(a_{(0)})$.

\begin{theorem}[\cite{DN}]\label{aut}
Let $L$ be a positive definite even lattice.
Then
\[
  \aut (V_L) = N(V_L)\,O(\hat{L})
\]
Moreover, the intersection $N(V_L)\cap O(\hat{L})$ contains a subgroup
$\hom (L,\Z/2\Z)$ and the quotient $\aut (V_L)/N(V_L)$ is isomorphic
to a quotient group of $O(L)$.
\end{theorem}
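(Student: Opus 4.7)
The plan is to analyze the action of an arbitrary automorphism $g \in \Aut(V_L)$ on the weight-one subspace $(V_L)_1$, which forms a reductive Lie algebra under the bracket $[a,b] = a_{(0)}b$. Explicitly, $(V_L)_1 = \h \oplus \bigoplus_{\al\in L(2)} \C e^\al$, with Cartan subalgebra $\h = \{h(-1)\vac\mid h\in L\otimes \C\}$, and the root system is $L(2)$. Since every $g\in\Aut(V_L)$ fixes the conformal vector (it is characterised intrinsically by the VOA structure), it preserves every weight space; in particular it restricts to a Lie algebra automorphism of $(V_L)_1$.

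First, I would use the classical fact that any two Cartan subalgebras of the reductive Lie algebra $(V_L)_1$ are conjugate by an inner automorphism, built from operators $\exp(\ad x)$ for $\ad$-nilpotent $x$. Each such $x\in (V_L)_1$ has $x_{(0)}$ locally nilpotent on $V_L$, so $\exp(x_{(0)})\in N(V_L)$ and restricts to $\exp(\ad x)$ on $(V_L)_1$. Hence after composing $g$ with a suitable element of $N(V_L)$, I may assume $g(\h)=\h$.

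Second, once $g$ preserves $\h$, consider the $\h$-weight decomposition $V_L=\bigoplus_{\al\in L} M(1)\tensor \C e^{\al}$. The automorphism $g$ permutes these eigenspaces, so it induces an $\R$-linear map $\bar{g}\colon L\otimes\R\to L\otimes\R$ with $\bar{g}(L)=L$. Compatibility of $g$ with the invariant bilinear form on $(V_L)_1$ forces $\bar{g}\in O(L)$, and one has $g(e^\al)=c_\al e^{\bar{g}(\al)}$ for scalars $c_\al\in\C^\times$. Using the vertex operator relations $e^\al_{(-1)}e^\be = \epsilon(\al,\be)e^{\al+\be}$ for the $2$-cocycle $\epsilon$ of $\hat{L}$, one shows that the map $\al\mapsto c_\al$ is a character $L\to \C^\times$ modulo signs coming from the cocycle. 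Such a character is realised by a toral inner automorphism $\exp(2\pii h_{(0)})$ with $h\in\h$; absorbing this into $N(V_L)$ allows one to take $c_\al\in\{\pm 1\}$, which exhibits $g$ as an element of $O(\hat{L})$. This proves $\Aut(V_L)=N(V_L)\,O(\hat{L})$.

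For the remaining assertions, a character $\chi\in\hom(L,\Z/2\Z)$ gives both an element of $O(\hat{L})$ lying over $\id\in O(L)$ (namely $e^\al\mapsto (-1)^{\chi(\al)}e^\al$) and a toral automorphism $\exp(\pii h_\chi(0))\in N(V_L)$ with $h_\chi\in \h$ chosen so $(h_\chi|\al)\equiv \chi(\al)\pmod{2\Z}$; thus $\hom(L,\Z/2\Z)\subseteq N(V_L)\cap O(\hat{L})$. The quotient $\Aut(V_L)/N(V_L)\cong O(\hat{L})/(N(V_L)\cap O(\hat{L}))$ then maps onto a quotient of $O(L)$ via the projection $O(\hat{L})\to O(L)$. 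The main obstacle is the Cartan-conjugacy step: one must guarantee that the required inner automorphisms of the Lie algebra $(V_L)_1$ actually come from VOA automorphisms, which ultimately rests on the local nilpotency of $x_{(0)}$ for nilpotent root vectors $x\in(V_L)_1$ and on showing that the Cartan part of $g$, after reduction, matches with the cocycle data of $\hat{L}$ rather than producing an obstruction.
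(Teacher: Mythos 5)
This theorem is quoted from \cite{DN}; the paper supplies no proof of its own, and your sketch correctly reproduces the standard Dong--Nagatomo argument (conjugacy of Cartan subalgebras of the reductive Lie algebra $(V_L)_1$ via exponentials lying in $N(V_L)$, followed by splitting the induced diagonal action on the weight spaces $M(1)\otimes\C e^{\alpha}$ into a toral character and a $\pm1$-valued lift of $\bar{g}$). The only step worth tightening is the passage from ``character modulo signs'' to an actual element of $O(\hat{L})$: it is cleanest to first invoke the existence of some lift of $\bar{g}$ to $O(\hat{L})$ (\cite[Proposition 5.4.1]{FLM}) and compare $g$ with it, so that the remaining discrepancy is an honest character of $L$, necessarily of the form $\alpha\mapsto e^{(h|\alpha)}$ and hence realised by $\exp(h_{(0)})\in N(V_L)$.
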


\begin{remark}\label{L2=0}
  If $L(2)=\emptyset$, then $(V_L)_1=\Span\{ \alpha{(-1)} \cdot 1 \mid \alpha\in L\}$.
  In this case, the normal subgroup
  $N(V_L)= \{\exp(\lambda \alpha{(0)}) \mid \alpha\in L,~\lambda \in \C\}$ is abelian and
  we have $N(V_L) \cap O(\hat{L})= \hom (L,\Z/2\Z)$. Moreover, $\aut (V_L)/N(V_L) \cong O(L)$ and we have an exact sequence
  \begin{equation}\label{eq:5.3}
    1\to N(V_L) \to \aut(V_L) \stackrel{~\varphi~}\to O(L)\to 1.
  \end{equation}
  Note also that $\exp(\lambda \al_{(0)})$ acts trivially on $M(1)$ and
  $\exp(\lambda \alpha{(0)}) e^\beta = \exp(\lambda \langle \alpha, \beta\rangle) e^{\beta}$ for 
any
  $\lambda \in \C$ and $\alpha, \beta \in L$.
\end{remark}

The following theorem can be proved by the same argument as in \cite[Theorem 
5.15]{LY2}.
 
\begin{theorem}
 \label{normalizer}
Let $L$ be a positive-definite rootless even lattice.
Let $g$ be a fixed-point free isometry of $L$ of finite order and
$\hat{g}$ a lift of $g$ in $O(\hat{L})$.
Then we have the following exact sequences:
\[
\begin{split}
 1\longrightarrow \hom(L/(1-g)L, \C^*)
  \longrightarrow N_{\aut(V_L)}(\langle \hat{g}\rangle)
  \stackrel{\varphi}\longrightarrow N_{O(L)}(\langle g\rangle) \longrightarrow 1;\\
   1\longrightarrow \hom(L/(1-g)L, \C^*)
  \longrightarrow C_{\aut(V_L)}(\hat{g})
  \stackrel{\varphi}\longrightarrow C_{O(L)}(g) \longrightarrow 1. 
\end{split}
\]
\end{theorem}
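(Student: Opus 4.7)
The plan is to use the exact sequence
\[
1 \to N(V_L) \to \aut(V_L) \stackrel{\varphi}{\to} O(L) \to 1
\]
of Remark \ref{L2=0}, under which $N(V_L)$ is identified with the complex torus $T := \hom(L, \C^*)$ via $\exp(\lambda\alpha_{(0)}) \leftrightarrow (\beta \mapsto \exp(\lambda(\alpha,\beta)))$. Both claimed sequences will then follow by restricting this to the relevant subgroup and analyzing image and kernel separately. Since $\varphi$ is a homomorphism with $\varphi(\hat g)=g$, the inclusions $\varphi(N_{\aut(V_L)}(\langle\hat g\rangle)) \subseteq N_{O(L)}(\langle g\rangle)$ and $\varphi(C_{\aut(V_L)}(\hat g)) \subseteq C_{O(L)}(g)$ are automatic.

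The substantive step is surjectivity. Given $h \in N_{O(L)}(\langle g\rangle)$ with $hgh^{-1}=g^k$, I would pick any lift $\hat h\in O(\hat L)$; then $\hat h\hat g\hat h^{-1}$ is a lift of $g^k$, so it differs from $\hat g^k$ by a sign character $\xi \in \hom(L, \Z/2\Z) \subseteq T$. A short calculation using $\hat g^k \exp(\lambda \alpha_{(0)})\hat g^{-k} = \exp(\lambda (g^k\alpha)_{(0)})$ shows that replacing $\hat h$ by $t\hat h$ with $t\in T$ alters $\hat h\hat g\hat h^{-1}$ by the factor $[t,\hat g^k]$, which acts on $e^\beta$ as multiplication by $t((g^k-1)\beta)$. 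Thus to arrange $(t\hat h)\hat g(t\hat h)^{-1} = \hat g^k$ one must solve $(g^k-1)^*(t) = \xi^{-1}$ in $T$; specialized to $k=1$, this also handles the centralizer case.

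To produce the required $t$, I would exploit that $g^k$ is fixed-point-free on $L$ (since $g$ is and conjugation preserves this), so $g^k - 1$ is injective on $L$ with finite cokernel $L/(1-g^k)L$. Applying $\hom(-,\C^*)$ and using that $\C^*$ is divisible, hence $\Z$-injective, yields the short exact sequence
\[
1 \to \hom(L/(1-g^k)L, \C^*) \to T \stackrel{(g^k-1)^*}{\longrightarrow} T \to 1,
\]
which simultaneously delivers the surjectivity above and, on specializing to $k=1$, identifies the kernel of $\varphi$ on the centralizer (and on the normalizer) with $\hom(L/(1-g)L, \C^*)$. The centralizer sequence follows at once, and the normalizer sequence is obtained by the same argument running over the residues of $k$ modulo $|g|$, exactly as in \cite[Theorem 5.15]{LY2}. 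The main subtlety is the cohomological surjectivity of $(g^k - 1)^*$ on $T$; once this is in place as a clean consequence of the divisibility of $\C^*$ combined with the fixed-point-freeness of $g$, the remainder of the argument is formal.
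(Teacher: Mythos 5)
Your argument is correct and is essentially the proof the paper intends: the paper simply defers to \cite[Theorem 5.15]{LY2}, whose argument is exactly what you describe --- restrict the exact sequence $1\to N(V_L)\to \aut(V_L)\stackrel{\varphi}{\to} O(L)\to 1$ of Remark \ref{L2=0}, identify the kernel on the centralizer/normalizer with $\hom(L/(1-g)L,\C^*)$ via $t\mapsto t\circ(1-g)$, and get surjectivity by correcting an arbitrary lift $\hat h$ of $h$ by a torus element, using that $(1-g^k)^*$ is surjective on $\hom(L,\C^*)$ because $g^k$ is fixed-point free (as $k$ is prime to $|g|$) and $\C^*$ is divisible. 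The only point to keep in mind, for you as for the paper, is that the computation of the kernel in the normalizer sequence tacitly uses a lift $\hat g$ with $\hat g^{|g|}=1$.
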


Let $h\in N_{\aut(V_L)}(\langle \hat{g}\rangle )$. Then it is clear that $hx \in V_L^{\hat{g}}$ for any $x\in 
V_L^{\hat{g}}$. Therefore, $N_{\aut(V_L)}(\langle \hat{g}\rangle)$ acts on $V_L^{\hat{g}}$ and there is a group 
homomorphism $$f: N_{\aut(V_L)}(\langle \hat{g}\rangle)/ \langle \hat{g}\rangle \longrightarrow 
\aut(V_L^{\hat{g}}).$$ The key question is to determine the image of $f$ and if $f$ is surjective. In 
particular, one would like to determine if there exist automorphisms in $\aut(V_L^{\hat{g}})$ 
which are not induced from $N_{\aut(V_L)}(\langle \hat{g}\rangle)$. We call such an 
automorphism an \emph{extra automorphism}.

\begin{definition}
An element $g\in O(L)$ is called a fourvolution  if $g^2=-1$ on $L$. 
\end{definition}

\begin{lemma}[\cite{Gr2tod}]
If $f$ is a fourvolution of a lattice $L$, then the adjoint of $1\pm f$ is $1\mp f$ and $1\pm f$ is an isometry scaled by $\sqrt{2}$. Moreover, 
we have $L\geq (1-f)L \geq 2L$ and $|L : (1-f)L| = |(1-f)L : 2L| = |L/2L|^{1/2}$. In particular, $\mathrm{rank}(L)$ is even.
\end{lemma}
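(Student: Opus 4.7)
The whole lemma rests on the single algebraic identity $(1+f)(1-f) = 1 - f^{2} = 2$, so my plan is to deduce each assertion from this relation in a few short, essentially formal steps.

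First I would handle the adjoint and scaling claims. Since $f$ is an isometry, its adjoint is $f^{-1}$, and from $f^{2} = -1$ we get $f^{-1} = -f$; hence $f^{*} = -f$, and $(1 \pm f)^{*} = 1 \mp f$ by linearity of the adjoint. Then for all $x,y\in L\otimes\R$,
\begin{equation*}
\bigl((1 \pm f)x \,\big|\, (1 \pm f)y\bigr) = \bigl((1 \mp f)(1 \pm f)x \,\big|\, y\bigr) = 2(x|y),
\end{equation*}
which is exactly the statement that $\tfrac{1}{\sqrt{2}}(1\pm f)$ is an isometry.

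Next I would establish the inclusion chain $L \supseteq (1-f)L \supseteq 2L$. The first inclusion is immediate from $f\in O(L)$, and the second follows from $2L = (1+f)(1-f)L \subseteq (1-f)L$, using $(1+f)L \subseteq L$.

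The main step is the index computation, which I would handle via determinants. The minimal polynomial of $f$ divides $X^{2}+1$, so over $\C$ the eigenvalues of $f$ are $\pm i$; since $f$ is defined over $\R$ these occur in conjugate pairs with equal multiplicity $n$, which simultaneously forces $\mathrm{rank}(L)=2n$ to be even and yields
\begin{equation*}
\det(1\pm f) = (1\mp i)^{n}(1\pm i)^{n} = 2^{n}.
\end{equation*}
Hence $|L:(1\pm f)L|=2^{n}=|L/2L|^{1/2}$, and the remaining index $|(1-f)L:2L|$ is forced by multiplicativity along the chain to equal $2^{2n}/2^{n}=2^{n}$ as well. The only step demanding a moment's thought is the eigenvalue multiplicity count; once that is in place, everything else is a direct consequence of $(1+f)(1-f)=2$.
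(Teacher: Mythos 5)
Your proof is correct. Note that the paper itself gives no argument for this lemma --- it is simply quoted from Griess's paper [Gr05] --- so there is no in-text proof to compare against; your derivation is the standard one and is essentially what one finds in that reference. All the pieces are sound: $f^{*}=f^{-1}=-f$ gives the adjoint claim, the identity $(1+f)(1-f)=1-f^{2}=2$ gives both the scaling and the inclusion $2L\subseteq(1-f)L$, and the index computation via $|L:(1-f)L|=|\det(1-f)|$ together with the conjugate-pair eigenvalue count $\det(1-f)=(1-i)^{n}(1+i)^{n}=2^{n}$ (which also forces $\mathrm{rank}(L)=2n$) is exactly right. The only implicit ingredient worth flagging is the standard fact that an injective endomorphism $\phi$ of a full-rank lattice satisfies $[L:\phi(L)]=|\det\phi|$, which you use without comment; it is routine (Smith normal form) and does not constitute a gap.
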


The main purpose of this article is to determine the full automorphism group of $V_L^{\hat{g}}$ when $g$ is a fourvolution and $L(2)=\{x\in L\mid (x|x)=2\} = \emptyset$. In particular, we will show that $V_L^{\hat{g}}$ contains no extra automorphisms unless $L\cong \sqrt{2}E_8$ or $BW_{16}$.  

\section{Orbifolds of lattice VOAs having extra automorphisms }
Let $L$ be an even lattice with $L(2)=\emptyset$. 
Let $g\in O(L)$ be a fourvolution, i.e., $g^2=-1$. 
In this section, we will study $\Aut(V_L^{\hat{g}})$. In particular, we will determine when  $\Aut(V_L^{\hat{g}})$ contains an extra automorphism.

First we recall that the orbifold VOA $V_{L}^{\hat{g}}$ is  $C_2$-cofinite and rational \cite{Mi,CM} and  any irreducible $V_{L}^{\hat{g}}$-module is a submodule of an irreducible $\hat{g}^i$-twisted $V_{L}$-module for some $0\le i\le |\hat{g}|-1$  \cite{DRX}.

For any irreducible (untwisted or twisted) module $M$ of $V_L$, $M$ is $\hat{g}$-stable if 
$M\circ \hat{g}\cong M$. In this case, $\hat{g}$ acts on $M$ and we use 
\[
M(j)= \{ x\in M\mid  \hat{g} x= e^{2\pi \sqrt{-1}j / n} x\}, \quad  0 \leq j\leq n-1,\ n=|\hat{g}|,   
\]
to denote the eigenspaces of $\hat{g} $ on $M$.  Notice that $V_{\lambda+L}(j)$ is a simple current module of $V_L^{\hat{g}}$ if $V_{\lambda+L}$ is $\hat{g}$-stable, or equivalently, $(1-g)\lambda\in L$ (see for example \cite{Lam19}). By a result in \cite{DLM2}, it is known that the number of inequivalent  irreducible $\hat{g}^i$-twisted modules is equal to the number of inequivalent irreducible $\hat{g}^i$-stable modules of $V_L$. Moreover, all irreducible $\hat{g}^i$-twisted modules are $\hat{g}$-stable. 
 
Let $(L^*/L)^{g^i}$ be the set of cosets of $L$ in $L^*$ fixed by $g^i$ and let $P_0^{g^i}:L^* \to \Q\otimes_\Z L^{g^i}$ be the orthogonal projection. 
Then  $V_L$ has exactly $|(L^*/L)^{g^i}|$ irreducible $\hat{g}^i$-twisted $V_L$-modules, up to isomorphism. The irreducible $\hat{g}^i$-twisted $V_L$-modules have been constructed in \cite{Le,DL} explicitly and are classified in \cite{BK04}.
They are given by 
\begin{equation}
V_{\lambda+L}[\hat{g}^i]= M(1)[g^i]\otimes\C[P_0^{g^i}(\lambda+L)]\otimes T_{\tilde{\lambda}}, 
\qquad \text{ for } \lambda+L\in (L^*/L)^{g^i},   \label{twmodule0}
\end{equation}
where $M(1)[g^i]$ is the ``$g^i$-twisted" free bosonic space, $\C[\lambda+P_0^{g^i}(L)]$ is a module for the group algebra of $P_0^{g^i}(L)$ and $T_{\tilde{\lambda}}$ is an irreducible module for a certain ``$g^i$-twisted" central extension of $L_{g^i}$ associated with $\lambda$ (see \cite[Propositions 6.1 and 6.2]{Le} and \cite[Remark 4.2]{DL} for detail).

Now assume that $V_L^{\hat{g}}$ has an extra automorphism $\sigma$.  
Then by \cite[Theorem 2.1]{Sh07} and $V_L(0)\circ\sigma=V_L^{\hat{g}}\circ\sigma\cong V_L^{\hat{g}}=V_L(0)$, we have
\[
\{ V_L(r)\circ \sigma \mid 1\leq r\leq 3\} \neq \{ V_L(r) \mid 1\leq r\leq 3\}.  
\]
In other words, $V_L(1)\circ \sigma$ is isomorphic to a simple current module of $V_L^{\hat{g}}$ 
not containing in $\{ V_L(r) \mid 1\leq r\leq 3\}$. 
By the classification of simple current irreducible $V_L^{\hat{g}}$-modules,   $V_L(1)\circ \sigma$ is either isomorphic to 
\begin{enumerate}[(I)]
\item  $V_{\lambda+L}(r)$ for some $\lambda\in L^*\setminus L$ with $(1-g)\lambda\in L$ and $0\leq r\leq 3$; 
 
\item an irreducible $V_L^{\hat{g}}$-submodule $V_{\lambda+L}^T[\hat{g}^2](j)$ for some $ 0\leq j \leq 3$; or 

 \item an irreducible $V_L^{\hat{g}}$-submodule $V_{\lambda+L}^T[\hat{g}^s]_\Z$ with integral weights for some $s=1$ or $3$.
\end{enumerate}


\subsection{Case (I): $\sigma$-conjugation of $V_L(1)$ is of untwisted type}\label{S:1}

By the assumption, $V_{\lambda+L}(r)$ is a simple current module of $V_L^{\hat{g}}$ and hence $(1-g)\lambda \in L$ \cite[Theorem 4.11]{Lam19}. Since  $g^2=-1$  and there are only two eigenvalues ($\sqrt{-1}$ and $-\sqrt{-1}$)  of $g$ on $\C\otimes_\Z L$, we have  
$$\dim V_L(1)_1= \dim V_L(3)_1 = \frac{\mathrm{rank}\,L}{2}.$$
Hence, we have $\dim V_{\lambda+L}(r)_1= \mathrm{rank}\,L/{2}$, also. 
Moreover,  we have $\dim V_{\lambda+L}(r)_1=|(\lambda+L)(2)|/4$ for any $0\leq r\leq 3$ and thus  
\begin{equation}
|(\lambda+L)(2)|= 2\cdot\mathrm{rank}\,L.\label{Eq:lambda2}
\end{equation}

Now set $N={\rm Span}_\Z\{L, \lambda\}$.
\begin{lemma}
We have $|N/L|=2$. 
\end{lemma}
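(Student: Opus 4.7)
The plan is to show directly that $2\lambda \in L$, so the cyclic quotient $N/L$ has order dividing $2$, and then to invoke $\lambda \in L^*\setminus L$ to rule out the trivial case. The latter is immediate from the hypothesis, so the real content is establishing $2\lambda \in L$.

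To do this, I would exploit the factorization $(1-g)(1+g) = 1 - g^{2} = 2$, which is valid on $L^{*}$ precisely because $g$ is a fourvolution. Starting from the given simple-current condition $(1-g)\lambda \in L$ and applying the isometry $g$, which preserves $L$, one obtains
\[
g(1-g)\lambda \;=\; (g-g^{2})\lambda \;=\; (1+g)\lambda \;\in\; L.
\]
Adding the two relations yields $(1-g)\lambda + (1+g)\lambda = 2\lambda \in L$. Since $N = \mathrm{Span}_{\Z}\{L,\lambda\}$, the quotient $N/L$ is cyclic generated by $\lambda + L$, and this generator has order at most $2$; the hypothesis $\lambda \notin L$ then forces $|N/L| = 2$.

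There is essentially no obstacle in this lemma: it is a one-line algebraic consequence of the defining identity $g^{2}=-1$ together with $g$-invariance of $L$. The only subtlety worth flagging is the implicit use of $g(L)=L$ when moving $g$ through the relation $(1-g)\lambda \in L$; absent that, one would only get the weaker statement $(g-g^{2})\lambda \in g(L^{*})=L^{*}$, which is vacuous.
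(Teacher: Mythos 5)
Your proof is correct and is essentially the paper's argument: the paper likewise derives $(1+g)\lambda=(1-g)g\lambda\in L$ from $(1-g)\lambda\in L$ together with $g^2=-1$ and $g(L)=L$, and concludes $2\lambda\in L$. The only cosmetic difference is that you apply $g$ directly to the relation $(1-g)\lambda\in L$, while the paper evaluates $(1-g)$ at $g\lambda\in\lambda+L$; these are the same computation.
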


\begin{proof}
First we note that $(1-g)\mu \in L$ for any $\mu \in \lambda+L$. In particular, 
$g\lambda \in \lambda+L$ and $(1-g)g\lambda= g\lambda -g^2\lambda=g\lambda+\lambda\in L\cap (2\lambda+L)$. It implies $2\lambda\in L$. 
\end{proof}

\begin{lemma}\label{Lem:A1n}
The sublattice of $N$ spanned by $N(2)$ is isometric to the orthogonal sum of $n$ copies of $A_{1}$, where $n=\mathrm{rank}\,L$.   
\end{lemma}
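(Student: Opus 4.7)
The plan is to show that any two distinct, non-antipodal vectors of $N(2)$ must be orthogonal, and then to finish by a counting argument. First I would observe that $N$ is $g$-stable: since $(1-g)\lambda\in L$ we have $g\lambda\in\lambda+L$, and combined with the $g$-stability of $L$ this gives $gN=N$. From the previous lemma $|N/L|=2$, so $2\lambda\in L$ and $N=L\cup(\lambda+L)$; since $L(2)=\emptyset$, every norm-$2$ vector of $N$ lies in $\lambda+L$, and therefore
\[
|N(2)|=|(\lambda+L)(2)|=2\cdot\mathrm{rank}\,L=2n
\]
by \eqref{Eq:lambda2}.

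Next I would establish the key orthogonality claim: for $v,w\in N(2)$ with $w\neq\pm v$, we have $(v,w)=0$. Both $v+w$ and $v-w$ lie in $(\lambda+L)+(\lambda+L)=2\lambda+L=L$, and since $L$ is even the norms $(v\pm w,v\pm w)=4\pm 2(v,w)$ are even integers, forcing $(v,w)\in\Z$. Cauchy--Schwarz gives $|(v,w)|\leq 2$ with equality only if $w=\pm v$, so for $w\neq\pm v$ we have $(v,w)\in\{-1,0,1\}$. But $(v,w)=\pm 1$ would make $v\mp w\in L$ a norm-$2$ vector, contradicting $L(2)=\emptyset$. Hence $(v,w)=0$.

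Finally, writing $N(2)=\{\pm v_1,\ldots,\pm v_n\}$ for a choice of representatives (such a listing exists because $|N(2)|=2n$ and $v\mapsto -v$ acts freely on $N(2)$), the previous step shows that $v_1,\ldots,v_n$ are pairwise orthogonal nonzero vectors, hence linearly independent. The sublattice of $N$ generated by $N(2)$ is therefore $\bigoplus_{i=1}^n\Z v_i$, and each rank-one summand with $(v_i,v_i)=2$ is isometric to $A_1$, so the total is $nA_1$. I do not anticipate any serious obstacle here: the content reduces to $2\lambda\in L$ (already established) together with the integrality and parity constraints forced by $L(2)=\emptyset$, and the only place requiring a little care is the integrality $(v,w)\in\Z$, which rests precisely on $v+w$ lying in the even lattice $L$.
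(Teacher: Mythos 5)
Your proof is correct and follows essentially the same route as the paper: both rest on the facts that $2\lambda\in L$ forces sums and differences of vectors in $(\lambda+L)(2)$ to land in the even rootless lattice $L$, which rules out inner product $\pm 1$, and then on the count $|(\lambda+L)(2)|=2n$. The only difference is cosmetic: the paper first computes $(\alpha\,|\,g\alpha)=0$ separately from $g^2=-1$ and then treats $\beta\notin\{\pm\alpha,\pm g\alpha\}$, whereas your uniform argument for all $w\neq\pm v$ subsumes that case.
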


\begin{proof}
Let $\alpha \in (\lambda+L)(2)$. Then 
\[
(\alpha|g\alpha) =(g\alpha|g^2\alpha)= -(g\alpha|\alpha);
\]
hence $(\alpha|g\alpha)=0$. Moreover, for any $\beta \in (\lambda+L)(2)$ and $\beta\notin \{\pm \alpha, \pm g\alpha\}$, we have $(\alpha |\beta)= (g\alpha |\beta)=0$; otherwise, $(\alpha|\beta)=\pm 1$. It implies $\alpha+\beta$ or $\alpha-\beta$ are roots in $L$, which is impossible.  Since $|(\lambda+L)(2)|= 2n$, $\lambda+L$ forms a semisimple root system of $A_1^n$ as desired.   
\end{proof}

\begin{remark}
By \cite[Proposition 1.8]{Sh04}, $L$ can be obtained by construction B from a binary code associated with an orthogonal basis in $(\lambda+L)(2)$. 
\end{remark}

Let $\theta=\hat{g}^2$. Then $\theta$ is a lift of the $-1$ isometry of $L$ and we use $V_L^+$ to denote the fixed point VOA $V_L^\theta$. 

\begin{proposition}\label{lifttoVL+}
Let $\sigma$ be an extra automorphism of $V_L^{\hat{g}}$ such that $V_L(1)\circ \sigma \cong V_{\lambda+L}(r)$ for some $0\le r\le 3$. Then $\sigma$ lifts to an automorphism of $V_L^+$.  
\end{proposition}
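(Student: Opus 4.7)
The plan is to show that $\sigma$-conjugation preserves the isomorphism class of the $V_L^{\hat g}$-module $V_L(2)$, which forces $\sigma$ to extend across the $\Z/2\Z$-simple-current extension $V_L^{\hat g}\subset V_L^+$.

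First I would record that since $\theta=\hat g^2$ acts on $V_L(r)$ as $(-1)^r$, we have $V_L^+=V_L^\theta=V_L(0)\oplus V_L(2)$, exhibiting $V_L^+$ as a simple-current extension of $V_L^{\hat g}$ by the self-dual simple current $V_L(2)$. A standard argument (using the one-dimensionality of the intertwining-operator space of type $\binom{V_L^{\hat g}}{V_L(2)\,V_L(2)}$ and an appropriate rescaling of the extending isomorphism) then shows that $\sigma$ lifts to an automorphism of $V_L^+$ whenever $V_L(2)\circ\sigma\cong V_L(2)$ as $V_L^{\hat g}$-modules, so it suffices to establish this isomorphism.

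To identify $V_L(2)\circ\sigma$, I would exploit the cyclic simple-current structure $V_L(1)\boxtimes V_L(1)=V_L(2)$ together with the fact that fusion commutes with $\sigma$-conjugation, yielding
\[
V_L(2)\circ\sigma \;\cong\; \bigl(V_L(1)\circ\sigma\bigr)\boxtimes\bigl(V_L(1)\circ\sigma\bigr) \;\cong\; V_{\lambda+L}(r)\boxtimes V_{\lambda+L}(r).
\]
This fusion product sits inside the $V_L$-module fusion $V_{\lambda+L}\boxtimes_{V_L}V_{\lambda+L}=V_{2\lambda+L}=V_L$, the last equality coming from $2\lambda\in L$ in the preceding lemma. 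Tracking $\hat g$-eigenvalues, each factor $V_{\lambda+L}(r)$ carries eigenvalue $i^r$ for a chosen lift of $\hat g$ on $V_{\lambda+L}$, so the fusion product carries eigenvalue $(i^r)^2=(-1)^r\in\{\pm 1\}$ and therefore lies in the $\theta$-fixed summand $V_L^+=V_L(0)\oplus V_L(2)$, giving $V_L(2)\circ\sigma\in\{[V_L(0)],[V_L(2)]\}$.

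The hard part will be precisely this eigenvalue bookkeeping on fusion of $V_L^{\hat g}$-modules obtained by restricting $\hat g$-stable $V_L$-modules: one needs to fix the lift of $\hat g$ on $V_{\lambda+L}$ consistently and verify that the grading on the fusion product is additive without any parity-breaking global shift. Once this is in place, a VOA automorphism of $V_L^{\hat g}$ cannot send the trivial class $[V_L(0)]$ to $[V_L(2)]$, so $V_L(2)\circ\sigma\cong V_L(2)$ is forced, and $\sigma$ lifts to $V_L^+$; as a byproduct, this also forces the exponent $r$ in the hypothesis to be odd.
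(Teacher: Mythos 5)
Your proposal is correct and follows the same overall strategy as the paper: square the conjugated module under fusion, observe that $V_{\lambda+L}(r)^{\boxtimes 2}=(V_L(1)\circ\sigma)^{\boxtimes 2}\cong V_L(2)\circ\sigma$ sits inside $V_{2\lambda+L}=V_L$ because $2\lambda\in L$, identify which summand of $V_L$ it is, and invoke Shimakura's lifting theorem for the simple current extension $V_L^{\hat{g}}\subset V_L^+=V_L(0)\oplus V_L(2)$. The one place you diverge is the identification step, and there the paper has a cleaner device that eliminates exactly what you flag as ``the hard part'': since $L(2)=\emptyset$ and a fourvolution has no eigenvalue $-1$ on $\mathbb{C}\otimes_{\mathbb{Z}}L$, one has $V_L(2)_1=0$, so $V_L(2)$ is the \emph{unique} irreducible $V_L^{\hat{g}}$-submodule of $V_L$ with top weight $>1$; as $\circ\sigma$ preserves top weights, $V_{\lambda+L}(r)^{\boxtimes 2}\cong V_L(2)$ follows at once, with no need to fix a lift of $\hat{g}$ on $V_{\lambda+L}$ or to verify additivity of the eigenvalue grading under fusion. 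Your alternative route (eigenvalue bookkeeping to narrow the answer to $\{[V_L(0)],[V_L(2)]\}$, then injectivity of $\circ\sigma$ together with $V_L(0)\circ\sigma\cong V_L(0)$ to rule out $[V_L(0)]$) can be made to work, and the injectivity step is sound, though note it is the implication ``$\circ\sigma$ cannot send $[V_L(2)]$ to $[V_L(0)]$'' that you need, not the converse as you phrased it; but the top-weight observation renders all of that, as well as the claimed byproduct about the parity of $r$, unnecessary.
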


\begin{proof}
Since $2\lambda\in L$ and $V_{\lambda+L}(r)$ is a simple current module, $V_{\lambda+L}(r)^{\boxtimes 2}$ is isomorphic to an irreducible $V_L^{\hat{g}}$-submodule of $ V_L$.  Moreover, $V_L(1)^{\boxtimes 2} \cong V_L(2)$ has top weight $> 1$. 

Since $V_L(2)$ is the only irreducible  $V_L^{\hat{g}}$-submodule of $ V_L$ which 
has the top weight $>1$, we have $V_{\lambda+L}(r)^{\boxtimes 2}= (V_L(1)\circ \sigma)^{\boxtimes 2} \cong V_L(2)$. Therefore, $\sigma$ preserves the subspace
$V_L^+= V_L^{\hat{g}}+ V_L(2)$ and $\sigma$ lifts to an automorphism of $V_L^+$ by 
 \cite[Theorem 2.1]{Sh07}. 
\end{proof}

\subsection{Case (II): $\sigma$-conjugation of $V_L(1)$ is contained in $\hat{g}^2$-twisted module}\label{S:3}

Next, we consider Case (II), i.e, $V_L(1)\circ \sigma\cong V_{\lambda+L}^T[\hat{g}^2](j) $ for some $0\leq j\leq 3$.

In this case,  $V_{\lambda+L}^T[\hat{g}^2]_1 \neq 0$.   Since $\hat{g}^2$ is a lift of the $-1$ isometry, it is proved in \cite{Sh04} that 
$n=\mathrm{rank}\, L= 8$ or $16$.  Moreover, $L^*/L$ is an elementary abelian $2$-group and $L\cong \sqrt{2} E_8$ if $n=8$ and  $L\cong BW_{16}$ if $n=16$.

\subsection{Case (III): $\sigma$-conjugation of $V_L(1)$ is contained in $\hat{g}$-twisted $V_L$-module}\label{S:2}

Next, we consider Case (III), i.e, $V_L(1)\circ \sigma\cong V_{\lambda+L}^T[\hat{g}^s]_\Z$ for some $s=1$ or $3$. 

Since $g$ is fixed point free on $L$, the irreducible $\hat{g}^s$-twisted module $V_{\lambda+L}^T[\hat{g}^s]$, for $s=1$ or $3$, is given by 
\begin{equation*}
V_{\lambda+L}^T[\hat{g}^s]= M(1)[{g}^s]\otimes T_{\tilde{\lambda}}. 
\end{equation*}

For a fourvolution $g$ and $s$ odd, the conformal weight of $V_{\lambda+L}^T[\hat{g}^s]$ (see \cite{Le,DL}) is given by 
\begin{equation}
\varepsilon=\frac{3n}{4\times 4^2}.\label{Eq:esp}
\end{equation}
That $V_L(1)\circ \sigma\cong V_{\lambda+L}^T[\hat{g}^s]_\Z$  implies 
$\varepsilon \leq 1$ and $\varepsilon \in \frac{1}4 \Z$. It is easy to verify that 
$n=16$ is the only solution and $\varepsilon=3/4$. 
In this case, $V_{\lambda+L}^T[\hat{g}^s]_\Z$ is a simple current module for $V_L^{\hat{g}}$; hence we have $(1-g)L^*\leq L$ and $\dim T_{\tilde{\lambda}} =[L: (1-g)L^*]^{1/2}$ \cite[Corollary 3.7]{ALY}.  
Since  $\varepsilon=3/4$ and $\dim (V_{\lambda+L}^T[\hat{g}^s])_1= \dim V_L(1)_1= {\mathrm{rank}\,L}/{2}$,  we have $\dim T_{\tilde{\lambda}}=1$ and $L=(1-g)L^*$. Moreover, 
\[
|L^*/L|= |L/(1-g)L|=|\det(1-g)| =2^{n/2}.  
\]
For $n=16$, we have $|L^*/L|=2^8$. Note that $L_2=\emptyset$ and there is only one such lattice up to isometry and $L\cong BW_{16}$ \cite[Proposition 1.9]{Sh04}. 

By our assumption, $V_L(1)\circ \sigma\cong V_{\lambda+L}^T[\hat{g}^s]_\Z$ for some $\lambda\in L^*$ and $s=1$ or $3$ and we have 
\[
(V_{\lambda+L}^T[\hat{g}^s]_\Z )^{\boxtimes 2} \cong V_L(2)\circ \sigma. 
\]
On the other hand, $(V_{\lambda+L}^T[\hat{g}^s]_\Z )^{\boxtimes 2}$ is isomorphic to an irreducible  $V_{L}^{\hat{g}}$-submodule of a $\hat{g}^2$-twisted module $V_{\mu+L}^T[\hat{g}^2]$ for some $\mu \in L^*$. 
  
When $L\cong BW_{16}$, the conformal weight of $V_{\mu+L}^T[\hat{g}^2]$  is $1$ for any $\mu\in L^*$ and $V_{\mu+L}^T[\hat{g}^2](j)$, $0 \leq j\leq 3,$ has conformal weight either $1$ or $3/2$.  It  is a contradiction since  the conformal weight of  $V_L(2)$ is $\geq 2$.  That means Case (III) does not occur.

\section{Automorphism groups of $V_L^{\hat{g}}$}
In this section, we will study the automorphism groups of $V_L^{\hat{g}}$ when $g$ is a fourvolution, i.e, $g^2=-1$. Let $\theta=\hat{g}^2$. Then $\theta$ is a lift of the $-1$ isometry of $L$ and we use $V_L^+$ to denote the fixed point VOA $V_L^\theta$.

\subsection{$ L\ncong \sqrt{2}E_8$ nor $BW_{16}$}  \label{s:4.1}
First we assume that $ L\ncong \sqrt{2}E_8$ nor $BW_{16}$. 

\begin{theorem}\label{nbw16}
Let $L$ be even lattice with $L(2)=\emptyset$ and $g\in O(L)$ a fourvolution. 
Suppose $L\ncong\sqrt{2}E_8$ nor $BW_{16}$. Then $\Aut(V_L^{\hat{g}})\cong C_{\Aut(V_L^+)}(\bar{g})/ \langle  \bar{g}\rangle$, where $\bar{g}$ denotes the restriction of $\hat{g}$ on $V_L^+\cong V_L^{\hat{g}^2}$. 
\end{theorem}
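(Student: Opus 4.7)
The plan is to construct the restriction map from $C_{\Aut(V_L^+)}(\bar g)$ to $\Aut(V_L^{\hat g})$ and then check that its kernel is $\langle \bar g\rangle$ and that it is surjective. To see the map is well defined, note that on $V_L^+ = V_L(0) \oplus V_L(2)$ the element $\bar g = \hat g|_{V_L^+}$ is an involution, acting as $+1$ on $V_L(0) = V_L^{\hat g}$ and as $-1$ on $V_L(2)$, whose fixed-point subalgebra is precisely $V_L^{\hat g}$. Hence any $\varphi \in C_{\Aut(V_L^+)}(\bar g)$ preserves $V_L^{\hat g}$, giving a group homomorphism
\[
\Phi : C_{\Aut(V_L^+)}(\bar g) \longrightarrow \Aut(V_L^{\hat g}).
\]

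For the kernel computation, take $\varphi \in \ker \Phi$. Since $\varphi$ commutes with $\bar g$ it preserves the $(-1)$-eigenspace $V_L(2)$. As $V_L(2)$ is an irreducible $V_L^{\hat g}$-module on which $\varphi$ acts $V_L^{\hat g}$-linearly, Schur's lemma gives $\varphi|_{V_L(2)} = c\,\id$ for some $c \in \C^\times$. Equivariance of the VOA product $Y: V_L(2)\otimes V_L(2) \to V_L^{\hat g}$, together with the fact that $\varphi$ is the identity on the image, forces $c^2 = 1$. Thus $\varphi \in \{1, \bar g\} = \langle \bar g\rangle$.

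For surjectivity, fix $\sigma \in \Aut(V_L^{\hat g})$. \emph{Case 1:} if $\{V_L(r) \circ \sigma : 0\le r\le 3\} = \{V_L(r) : 0\le r\le 3\}$, then \cite[Theorem~2.1]{Sh07} produces a lift $\tilde\sigma \in \Aut(V_L)$. Since $\sigma$ induces a group automorphism of the cyclic fusion group $\{V_L(r)\}\cong \Z/4\Z$ fixing $V_L(0)$, it fixes the unique element of order $2$, so $\tilde\sigma$ preserves $V_L(2)$ and hence stabilizes $V_L^+$. \emph{Case 2:} if $\sigma$ is extra, the analysis of Cases (II) and (III) in Sections 3.2 and 3.3 shows that those twisted-sector scenarios force $L\cong \sqrt{2}E_8$ or $BW_{16}$, which is ruled out by hypothesis; so only Case (I) remains and Proposition~\ref{lifttoVL+} provides a lift $\tilde\sigma \in \Aut(V_L^+)$. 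In either case $\tilde\sigma$ fixes $V_L^{\hat g}$ setwise, and because $V_L(2)$ is the unique non-trivial simple $V_L^{\hat g}$-submodule of $V_L^+$, it must also be preserved. Consequently $\tilde\sigma$ stabilizes both eigenspaces of $\bar g$, so $\tilde\sigma \in C_{\Aut(V_L^+)}(\bar g)$ and $\Phi(\tilde\sigma) = \sigma$.

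The principal obstacle is surjectivity, and inside that the exclusion of extras coming from $\hat g$- and $\hat g^2$-twisted sectors. That exclusion is exactly the content of Sections 3.2 and 3.3: the top-weight and dimension constraints recorded there force any such $\sigma$ to exist only when $L$ is $\sqrt{2}E_8$ or $BW_{16}$. Once those cases are removed, only untwisted-type extras (Case (I)) remain, and Proposition~\ref{lifttoVL+} slots directly into the lifting step above.
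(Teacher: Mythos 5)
Your proposal is correct and follows essentially the same route as the paper: rule out the twisted-type cases (II) and (III) using the hypothesis $L\ncong\sqrt{2}E_8, BW_{16}$, use Proposition \ref{lifttoVL+} (and the permutation action on $\{V_L(r)\}$ for non-extra automorphisms) to show every automorphism stabilizes $V_L^+=V_L(0)\oplus V_L(2)$, and then identify $\Aut(V_L^{\hat g})$ with $C_{\Aut(V_L^+)}(\bar g)/\langle\bar g\rangle$. The only difference is that the paper concludes by citing \cite[Theorem 3.3]{Sh04} for this last identification, whereas you prove it directly via the restriction map, the Schur-lemma kernel computation, and the lifting argument for surjectivity — a sound unpacking of that citation.
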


\begin{proof}
Suppose $L\ncong\sqrt{2}E_8$ nor $BW_{16}$. Then either $V_L^{\hat{g}}$ has no extra automorphisms or Case (I) holds. In either cases, automorphisms of $V_L^{\hat{g}}$ will stabilize $V_L^{\hat{g}^2}=V_L(0) \oplus V_L(2)$ by Proposition \ref{lifttoVL+}.  Since $V_L^{\hat{g}^2}$ is a simple current extension of $V_L^{\hat{g}}$, the result then follows from  
\cite[Theorem 3.3]{Sh04}. 
\end{proof}

Recall that $g^2=-1$ and hence $\hat{g}^2$ is a lift the $(-1)$-isometry. 
The automorphism group for the fixed point VOA $V_L^+$ has been determined in \cite{Sh04}. We will recall some results in \cite{Sh04}. First, we recall the Construction B of a lattice from a binary code $C$. 

Let $C$ be a doubly even binary code of length $n$ and let $\mathcal{B}=\{\alpha_i \mid i\in\{1, \dots, n\}\} $ be an orthogonal basis of $\mathbb{R}^n$  of norm $2$, i.e, $\langle \alpha_i, \alpha_j\rangle =2\delta_{i,j}$. For $c=(c_1,\dots, c_n)\in \Z_2^n$,  set  $$\alpha_c=  \sum_{i=1}^n c_i\alpha_i.$$   The lattice 
\[
L_B(C) = \sum_{c\in C} \Z \frac{1}2\alpha_c + \sum_{i, j\in \{1, \dots, n\}} \Z (\alpha_i +\alpha_j)
\]
is often referred as to the lattice obtained by Construction B from $C $ associated
with $\mathcal{B}$. 
Note that $L_B(C)$ always contains the  sublattice  $\sum_{i, j\in \{1, \dots, n\}} \Z (\alpha_i +\alpha_j)\cong \sqrt{2}D_n$. 

Set $L_A(C) = L_B(C)+ \mathbb{Z}\alpha_1$. Then $\mathcal{B}=\{\alpha_i| i=1, \dots,n\} < L_A(C)$. Now fix  $a_k\in \widehat{L_A}(C)$ for each $k\in \{1, \dots, n\}$  such that $\bar{a}_k= \alpha_k$ and define 
\begin{equation}\label{extra_auto}
\sigma=\prod_{k=1}^n\exp((1+\sqrt{-2})(a_k)_0)\exp(\sqrt{-\frac{1}{2}}(a_k^{-1})_0)
\exp((-1+\sqrt{-2})(a_k)_0).
\end{equation}
Then $\sigma$ is an automorphism of the VOA $V_{L_A(C)}$. Indeed, $\sigma$ defines an automorphism of $V_{L_B(C)}^+$ (cf. \cite{FLM,Sh04}). 

\begin{remark} We shall note that there are usually several choices for the orthogonal basis $\mathcal{B}$ and 
the automorphism $\sigma$ depends on the choice of the orthogonal basis 
$\mathcal{B}=\{ \alpha_1, \dots, \alpha_n\}$.  Moreover, for each $\alpha_k\in \mathcal{B}$,  
\[
\begin{split}
\sigma :\quad  \alpha_k (-1) &\longmapsto e^{\alpha_k } + e^{-\alpha_k}, \\
e^{\alpha_k } + e^{-\alpha_k} &\longmapsto
 \alpha_k(-1),\\ 
  e^{\alpha_k } - e^{-\alpha_k}&\longmapsto -(e^{\alpha_k } - e^{-\alpha_k}).
\end{split}
\]
\end{remark}
 
The following theorem can be found in \cite{Sh04}.
 
\begin{theorem}[{\cite[Proposition 3.16]{Sh04}}]
Let $L$ be an even lattice such that $L(2)=\emptyset$. 
Then $\Aut(V_L^+)$ is generated by $O(\hat{L})/\langle \theta\rangle$ and the extra automorphisms defined as in \eqref{extra_auto}. In particular, $\Aut(V_L^+)$ contains an extra  automorphism if and only if $L$ can be constructed by  Construction B from some binary code $C$.    
\end{theorem}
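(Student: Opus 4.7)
The plan is to analyze an arbitrary $\sigma \in \Aut(V_L^+)$ by tracking the image of the simple current module $V_L^-$. Since fusion with a simple current preserves the property of being a simple current, $V_L^-\circ\sigma$ must be a simple current $V_L^+$-module. By the classification these fall into two families: the untwisted modules $V_{\lambda+L}^\pm$ with $2\lambda\in L$, and the irreducible components $V_{\mu+L}^{T,\pm}$ of $\theta$-twisted $V_L$-modules. By \cite[Theorem 2.1]{Sh07}, if $V_L^-\circ\sigma\cong V_L^-$ then $\sigma$ extends to an automorphism $\tilde\sigma$ of the simple-current extension $V_L=V_L^+\oplus V_L^-$; and Theorem~\ref{aut} together with Remark~\ref{L2=0} then gives $\tilde\sigma\in N(V_L)\cdot O(\hat L)$. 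Because $L(2)=\emptyset$, the only elements of $N(V_L)$ stabilizing $V_L^+$ are the sign characters in $\hom(L,\Z/2\Z)\subset O(\hat L)$, so modulo $\langle\theta\rangle$ we get $\sigma\in O(\hat L)/\langle\theta\rangle$.

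The heart of the proof is handling the cases in which $V_L^-\circ\sigma\not\cong V_L^-$. The arguments of Sections~\ref{S:1} and~\ref{S:3} of the paper specialize verbatim to $g=\theta$ of order $2$. In the untwisted subcase, $V_L^-\circ\sigma\cong V_{\lambda+L}^\epsilon$ for some $\lambda\in\tfrac12 L\setminus L$; the weight-one dimension equality $\dim(V_L^-)_1=\mathrm{rank}\,L$ forces $|(\lambda+L)(2)|=2\,\mathrm{rank}\,L$, and the elementary pairing argument of Lemma~\ref{Lem:A1n} shows that $(\lambda+L)(2)$ is a semisimple root system of type $A_1^n$. Therefore $L$ is obtained by Construction B from the binary code that records which half-sums $\tfrac12\alpha_c$ lie in $L$. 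For the twisted subcase, the conformal weight formula for $V_{\mu+L}^T$ together with the constraint that $V_L^-\circ\sigma$ has top weight $1$ forces $\mathrm{rank}\,L=8$ or $16$, and the classification recalled in Section~\ref{S:3} gives $L\cong\sqrt 2 E_8$ or $BW_{16}$; both of these are themselves Construction~B lattices (from the extended Hamming code and from the first order Reed-Muller code), so the conclusion of the theorem still applies.

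It remains to complete the argument by composing $\sigma$ with a suitable extra automorphism. Given the basis $\mathcal B=\{\alpha_1,\dots,\alpha_n\}$ extracted from $(\lambda+L)(2)$, I would take the automorphism $\tau$ defined by \eqref{extra_auto}; the display in the Remark shows that $\tau$ interchanges $\alpha_k(-1)\cdot\vacuum\in V_L^-$ with $e^{\alpha_k}+e^{-\alpha_k}\in V_L^+$ on the $A_1^n$-sublattice, and hence swaps $V_L^-$ with the appropriate $V_{\lambda+L}^\epsilon$. Composing $\sigma$ with an appropriate product of such $\tau$'s (and of the twisted-module analogues needed for $\sqrt2 E_8$ and $BW_{16}$) yields an automorphism that fixes $V_L^-$, which reduces to the first case. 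The main technical obstacle is exactly this reduction step: one must verify that the fusion-rule action of $\tau$ on the set of simple-current $V_L^+$-modules really does realize the transposition $V_L^-\leftrightarrow V_{\lambda+L}^\epsilon$ prescribed by $\sigma$, which comes down to a direct computation with the triality-style formula \eqref{extra_auto} and a careful bookkeeping of the sign choices in the central extension $\hat L_A(C)$.
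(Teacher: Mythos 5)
You should first note that the paper offers no proof of this statement at all: it is imported verbatim from \cite[Proposition 3.16]{Sh04}, so there is no internal argument to compare against. Your sketch is therefore measured against Shimakura's proof, and in outline you have reconstructed the right strategy --- track $V_L^-\circ\sigma$ among the simple current $V_L^+$-modules, use \cite[Theorem 2.1]{Sh07} to lift $\sigma$ to $V_L$ when $V_L^-$ is stabilized (whence $\sigma\in O(\hat{L})/\langle\theta\rangle$ via the exact sequence of Theorem \ref{normalizer} with $g=-1$), and otherwise exhibit an explicit extra automorphism moving $V_L^-$ back. This is exactly the method the present paper adapts in Sections \ref{S:1}--\ref{S:2} and Theorem \ref{thm:4.4} for the order-four case.

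That said, as a proof the proposal has two genuine gaps. First, the step you yourself flag as ``the main technical obstacle'' --- verifying that the automorphism $\tau$ of \eqref{extra_auto} actually induces the transposition $V_L^-\leftrightarrow V_{\lambda+L}^{\epsilon}$ on $\irr(V_L^+)$ --- is the substantive content of Shimakura's argument (and of the FLM triality computation it generalizes); deferring it to ``a direct computation with careful bookkeeping of signs'' leaves the heart of the proof undone, since the sign choices in $\widehat{L_A(C)}$ are precisely where such a computation can fail. Second, your treatment of the twisted subcase is logically incomplete for the stated theorem: showing that $\sqrt{2}E_8$ and $BW_{16}$ are themselves Construction B lattices settles the ``if and only if'' clause, but the generation statement requires that an automorphism with $V_L^-\circ\sigma\cong V_{\mu+L}^{T}[\theta]^{\pm}$ already lies in the group generated by $O(\hat{L})/\langle\theta\rangle$ and the automorphisms \eqref{extra_auto}. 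Introducing ``twisted-module analogues'' as additional generators would prove a weaker assertion. In \cite{Sh04} this is handled by identifying $\irr(V_L^+)$ with a quadratic space over $\Z/2\Z$ and showing that the subgroup generated by the \eqref{extra_auto}-type automorphisms and $O(\hat{L})$ already acts transitively on the full orbit of $V_L^-$, including the twisted-type points; some version of that orbit computation is needed to close your argument.
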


\begin{theorem}\label{thm:4.4}
Suppose $L(2)=\emptyset$ and $L\ncong \sqrt{2}E_8$ nor $BW_{16}$. Then for a fourvolution $g\in O(L)$, we have 
\[
\Aut(V_L^{\hat{g}}) \cong N_{\Aut(V_L)}(\langle \hat{g}\rangle)/ \langle\hat{g}\rangle.
\]
\end{theorem}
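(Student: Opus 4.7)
The plan is to prove that the natural restriction homomorphism
\begin{equation*}
f: N_{\Aut(V_L)}(\langle \hat{g}\rangle)/\langle \hat{g}\rangle \longrightarrow \Aut(V_L^{\hat{g}})
\end{equation*}
is an isomorphism. The decisive input is the analysis of Section~3, which, under the hypothesis $L\ncong\sqrt{2}E_8, BW_{16}$, rules out Cases (I), (II) and (III); equivalently, for every $\sigma\in\Aut(V_L^{\hat{g}})$ the $V_L^{\hat{g}}$-module $V_L(1)\circ\sigma$ must lie in $\{V_L(r)\mid 0\le r\le 3\}$.

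For surjectivity, fix $\sigma\in\Aut(V_L^{\hat{g}})$. Combining the Section~3 conclusion with the fusion rules $V_L(r)\boxtimes V_L(s)\cong V_L(r+s)$, one concludes that $\sigma$ permutes $\{V_L(r)\mid 0\le r\le 3\}$ via a \emph{group} automorphism $\pi$ of the fusion group $\Z/4$, so $\pi\in(\Z/4)^\times=\{\pm 1\}$. Since $V_L=\bigoplus_{r=0}^{3}V_L(r)$ is the $\Z/4$-graded simple current extension of $V_L^{\hat{g}}$ and $\sigma$ is $\pi$-equivariant on this grading, the standard lifting theorem for VOA automorphisms along simple current extensions (cf.\ \cite[Theorem 2.1]{Sh07}) produces an extension $\tilde\sigma\in\Aut(V_L)$. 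Since $\hat{g}$ acts on $V_L(r)$ as the scalar $i^r$ and $\tilde\sigma$ sends $V_L(r)$ to $V_L(\pi(r))$, a direct computation gives $\tilde\sigma\hat{g}\tilde\sigma^{-1}\in\{\hat{g},\hat{g}^{-1}\}\subset\langle\hat{g}\rangle$, so $\tilde\sigma\in N_{\Aut(V_L)}(\langle\hat{g}\rangle)$ and $f(\tilde\sigma\langle\hat{g}\rangle)=\sigma$.

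For injectivity, suppose $\tilde{h}\in N_{\Aut(V_L)}(\langle\hat{g}\rangle)$ restricts to the identity on $V_L^{\hat{g}}=V_L(0)$. Then $\tilde{h}$ also preserves $V_L^+=V_L(0)\oplus V_L(2)=V_L^{\hat{g}^2}$ and acts as a scalar $\lambda_2$ on the simple $V_L^{\hat{g}}$-module $V_L(2)$ by Schur's lemma; the intertwining $V_L(2)\boxtimes V_L(2)\cong V_L(0)$ forces $\lambda_2^2=1$. Thus $\tilde{h}|_{V_L^+}$ is either the identity or $\bar{g}$. In the first case, $\tilde{h}$ is a VOA extension of $\id_{V_L^+}$ along the simple current extension $V_L=V_L^+\oplus V_L^-$, and by the same scalar/Schur argument it must equal $1$ or $\theta=\hat{g}^2$; in the second case, $\tilde{h}$ extends $\bar{g}$, and the only possibilities are $\hat{g}$ and $\hat{g}^3$. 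In all cases $\tilde{h}\in\langle\hat{g}\rangle$.

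The main obstacle is the surjectivity step, and specifically its reliance on the Section~3 classification showing that $V_L(1)\circ\sigma$ cannot be an ``exotic'' simple current outside $\{V_L(r)\}$; once that is in hand, the lifting of $\sigma$ to $\tilde\sigma\in\Aut(V_L)$ and the check that $\tilde\sigma$ normalizes $\langle\hat{g}\rangle$ follow formally from the $\Z/4$ fusion-group structure together with \cite[Theorem 2.1]{Sh07}. An alternative route goes via Theorem~\ref{nbw16}: restriction gives a surjection $N_{\Aut(V_L)}(\langle\hat{g}\rangle)\twoheadrightarrow C_{\Aut(V_L^+)}(\bar{g})$ with kernel $\langle\hat{g}^2\rangle$, and quotienting by the order-two images of $\hat{g}$ on the left and $\bar{g}$ on the right yields the claimed isomorphism.
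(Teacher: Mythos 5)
Your proof rests on a claim that is false: that the analysis of Section~3 already rules out Case (I) under the hypothesis $L\ncong\sqrt{2}E_8,\,BW_{16}$. It does not. Section~3 shows that Cases (II) and (III) force $L\cong\sqrt{2}E_8$ or $BW_{16}$, but for Case (I) it only extracts structural consequences: $(\lambda+L)(2)$ is a root system of type $A_1^{n}$, $L=L_B(C)$ for a binary code $C$, and $\sigma$ lifts to $\Aut(V_L^+)$ (Proposition~\ref{lifttoVL+}). No contradiction is obtained there, and none can follow from fusion-rule or simple-current formalities alone, because for $L=L_B(C)$ the VOA $V_L^+$ genuinely admits extra automorphisms sending $V_L^-$ to $V_{\lambda+L}^{\pm}$. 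The entire content of the paper's proof of Theorem~\ref{thm:4.4} is the exclusion of Case (I): one writes a hypothetical $\sigma$ with $V_L(1)\circ\sigma\cong V_{\lambda+L}(j)$ as $h\sigma_\lambda$ with $h\in O(\hat L)$ and $\sigma_\lambda$ the explicit extra automorphism \eqref{extra_auto} attached to an orthogonal basis $\mathcal{B}_\lambda$ of $(\lambda+L)(2)$, chooses $\alpha_1,\alpha_2\in\mathcal{B}_\lambda$ with $g\colon\alpha_1\mapsto\alpha_2\mapsto-\alpha_1$, and observes that the $\hat g$-fixed vector $(e^{\alpha_1}+e^{-\alpha_1})_{-1}(e^{\alpha_2}+e^{-\alpha_2})\in V_L^{\hat g}$ is sent to $\bar h\alpha_1(-1)\bar h\alpha_2(-1)\cdot\vac$, which is never $\hat g$-fixed because $g$ has only the eigenvalues $\pm\sqrt{-1}$. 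This is the one step where the fourvolution hypothesis does real work, and it is absent from your argument.

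Once Case (I) is excluded, your surjectivity and injectivity arguments are essentially correct and amount to the standard reductions (in effect Theorem~\ref{nbw16} combined with \cite[Theorem 3.3]{Sh04}). Note, however, that your proposed ``alternative route'' contains the same gap in disguise: the surjectivity of the restriction map $N_{\Aut(V_L)}(\langle\hat g\rangle)\to C_{\Aut(V_L^+)}(\bar g)$ is precisely the assertion that no extra automorphism of $V_L^+$ centralizes $\bar g$, which again requires the explicit computation described above.
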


\begin{proof}
Suppose false. Then there is $\sigma\in \Aut(V_L^{\hat{g}})$ such that 
\[
\{ V_L(i)\circ \sigma \mid 0\leq i\leq 3\} \neq \{ V_L(i) \mid 0\leq i\leq 3\}.
\]
Since $L\ncong \sqrt{2}E_8$ nor $BW_{16}$, only Case (I) can occur; hence, there exist $\lambda\in L^*$ and $0\leq j \leq 3$ such that  $V_L(1) \circ \sigma \cong V_{\lambda+L}(j)$. 

Recall from the discussion in Case (I), we have  $|(\lambda+L)(2)|=2n$ and $(\lambda+L)(2)$ forms a root system of type $A_1^n$. Moreover, $\sigma$ lifts to an automorphism of $V_L^+= V_L(0)\oplus V_L(2)$.  In this case,  $V_L^-=V_L(1) \oplus V_L(3)$ and $V_L^-\circ \sigma\cong V_{\lambda+L}^\varepsilon$ for some $\varepsilon=\pm 1$.  Moreover, $N=\mathrm{Span}_Z\{
L, \lambda\}= L_A(C)$ for some binary code $C$ and $L= L_B(C)$. 

Fix an orthogonal basis $\mathcal{B}_{\lambda} < (\lambda+L)(2)$, i.e., a set of simple roots for $(\lambda+L)(2)$, and let $\sigma_\lambda$ be the extra automorphism associated with the basis $\mathcal{B}_{\lambda}$ as defined above.  Then $\sigma \sigma_{\lambda}^{-1}$  stabilize  $V_L^{-}$ and hence  $\sigma \sigma_{\lambda}^{-1}\in O(\hat{L})$ \cite{Sh04}
and $\sigma = h \sigma_{\lambda}$ for some $h\in O(\hat{L})$. 

Now let $\alpha_1, \alpha_2\in \mathcal{B}_{\lambda}$ such that  $g$ maps $\alpha_1 \to \alpha_2\to -\alpha_1\to -\alpha_2$.  Then $\hat{g}$ fixes 
the element $(e^{\alpha_1} + e^{-\alpha_1})_{-1} (e^{\alpha_2} + e^{-\alpha_2})$. Note that 
$(e^{\alpha_1} + e^{-\alpha_1}), (e^{\alpha_2} + e^{-\alpha_2})\in V_{\lambda+L}$ and hence 
$(e^{\alpha_1} + e^{-\alpha_1})_{-1} (e^{\alpha_2} + e^{-\alpha_2})\in V_{2\lambda+L}=V_L$. However, for any $h\in O(\hat{L})$, 
\[
\begin{split}
&\ h\sigma_{\lambda} ((e^{\alpha_1} + e^{-\alpha_1})_{-1} (e^{\alpha_2} + e^{-\alpha_2})\\
  = &\ h( \alpha_1(-1)\alpha_2(-1) \cdot 1)\\ 
= &\ \bar{h}\alpha_1(-1)\bar{h}\alpha_2(-1)\cdot 1,
\end{split}
\]
which is not fixed by $\hat{g}$.  It contradicts that $h \sigma_{\lambda} =\sigma\in \Aut(V_L^{\hat{g}}) $. 
\end{proof}

\subsection{Case: $L\cong \sqrt{2}E_8$ or $BW_{16}$}
Next we consider the case $L=\sqrt{2}E_8$ or $BW_{16}$. In both cases,  
$L^*/L$ is an elementary abelian $2$-group of order $2^8$. It is also known \cite{Sh04} that the set of inequivalent irreducible modules $\mathrm{Irr}(V_L^+)$ 
of $V_L^+$ forms an elementary abelian $2$-group under the fusion rules \cite{Sh04}. 
Recall that $V_{\lambda+L}^T[\theta]_\Z$ has the conformal weight $1$ if  $L\cong \sqrt{2}E_8$ or $BW_{16}$. Moreover, $\dim(V_{\lambda+L}^T[\theta]_1)=8$ if $L\cong \sqrt{2}E_8$ and 
$\dim(V_{\lambda+L}^T[\theta]_1)=16$ if $L\cong BW_{16}$. 

\begin{lemma}
Let $L=\sqrt{2}E_8$ or $BW_{16}$. 
Suppose there is a $\sigma\in V_{L}^{\hat{g}}$ such that $V_L(1)\circ \sigma\cong V_{\lambda+L}^T[\hat{g}^2](j)$.  
Then $\sigma$ stabilizes $V_L^+= V_L(0) \oplus V_L(2) $ by the conjugate action. 
\end{lemma}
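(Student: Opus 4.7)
The strategy is to show $V_L(2)\circ\sigma\cong V_L(2)$ as $V_L^{\hat{g}}$-modules; together with a simple current extension argument this will lift $\sigma$ to an automorphism of $V_L^+$. Because $V_L=\bigoplus_{i=0}^3 V_L(i)$ is a $\Z/4\Z$ simple current extension of $V_L^{\hat{g}}$, we have $V_L(1)\boxtimes V_L(1)\cong V_L(2)$, and since $\sigma$ is compatible with the fusion product this yields
\[
V_L(2)\circ\sigma \;\cong\; (V_L(1)\circ\sigma)\boxtimes(V_L(1)\circ\sigma) \;\cong\; M\boxtimes M,
\]
where $M:=V_{\lambda+L}^T[\hat{g}^2](j)$. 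The task then reduces to identifying $M\boxtimes M$.

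Because $L^*/L$ is an elementary abelian $2$-group (both for $\sqrt{2}E_8$ and $BW_{16}$), $2\lambda\in L$. The orbifold fusion of two $\hat{g}^2$-twisted modules adds twist labels in $\Z/4\Z$ (so $M\boxtimes M$ lies in the untwisted sector) and charges in $L^*/L$ (so the charge is trivial); hence $M\boxtimes M\cong V_L(k)$ for some $k\in\{0,1,2,3\}$. To pin down $k$ I compare conformal weights. Since $L(2)=\emptyset$, the weight-one subspace $(V_L)_1$ equals $\Span\{\alpha(-1)\vacuum\mid\alpha\in L\}$, on which $\hat{g}$ acts with eigenvalues $\pm\sqrt{-1}$; hence $V_L(0)_1=V_L(2)_1=0$, while $\dim V_L(1)_1=\dim V_L(3)_1=\mathrm{rank}(L)/2>0$. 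Combined with $\vacuum\in V_L(0)_0$, this identifies $V_L(2)$ as the unique $V_L(i)$ of lowest conformal weight $\geq 2$. Since $\sigma$ preserves conformal weights, $k=2$, so $V_L(2)\circ\sigma\cong V_L(2)$.

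With this isomorphism in hand, the $\Z/2\Z$ simple current extension $V_L^+ = V_L(0)\oplus V_L(2)$ of $V_L^{\hat{g}}$ admits $\sigma$ as a lift by \cite[Theorem 2.1]{Sh07}; equivalently, the conjugation action of $\sigma$ stabilizes $V_L^+$. The main obstacle I expect is the bookkeeping of twisted--twisted orbifold fusion in the identification $M\boxtimes M\cong V_L(k)$: one must verify that the $L^*/L$-charge is additive under fusion and that the $\Z/4\Z$ eigenspace label of the product lies in the expected range. Both are standard consequences of the explicit twisted module construction in \cite{Le,DL} together with the simple current analysis in \cite{Lam19}, and both simplify considerably here thanks to $2L^*\subseteq L$.
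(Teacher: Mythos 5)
Your proposal is correct and follows essentially the same route as the paper: both arguments identify $(V_L(1)\circ\sigma)^{\boxtimes 2}\cong M\boxtimes M$ with $V_L(2)$ by first constraining $M\boxtimes M$ via the group structure of the fusion rules and then pinning it down by the top weight, which forces $\sigma$ to stabilize $V_L^+=V_L(0)\oplus V_L(2)$. The only cosmetic difference is that the paper constrains $M\boxtimes M$ to lie in $V_L^+$ by citing $V_{\lambda+L}^T[\hat{g}^2]_\Z\boxtimes_{V_L^+}V_{\lambda+L}^T[\hat{g}^2]_\Z=V_L^+$, whereas you reach the same conclusion from additivity of the twist and charge labels together with the top-weight comparison among all four $V_L(k)$.
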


\begin{proof}
For $L\cong \sqrt{2}E_8$ or $BW_{16}$,  we have $V_{\lambda+L}^T[\hat{g}^2]_\Z \boxtimes_{V_L^+} V_{\lambda+L}^T[\hat{g}^2]_\Z = V_L^+$ and hence $V_{\lambda+L}^T[\hat{g}^2](j)^{\boxtimes 2} < V_{L}^+$. Therefore,  $V_{\lambda+L}^T[\hat{g}^2](j)^{\boxtimes 2}\cong V_L(2)$ as $V_L(2)$ is the only irreducible $V_L^{\hat{g}}$ module with the top weight $>1$ in $V_L^+$.
\end{proof}

As a consequence, we have 
\begin{proposition}\label{e8bw16}
Let $L\cong \sqrt{2}E_8$ or $BW_{16}$ and $g\in O(L)$ a fourvolution. 
Then, $$\Aut(V_L^{\hat{g}})\cong C_{\Aut(V_L^+)}(\bar{g})/\langle \bar{g} \rangle,$$ 
where $\bar{g}$ denotes the restriction of $\hat{g}$ on $V_L^+\cong V_L^{\hat{g}^2}$. 
\end{proposition}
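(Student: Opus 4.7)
The plan is to argue exactly as in the proof of Theorem~\ref{nbw16}, noting that for $L\cong\sqrt{2}E_8$ or $BW_{16}$ the case analysis of Section~3 admits one additional possibility, namely Case~(II), which is handled by the lemma just proved. Concretely, I would show first that every $\sigma\in\Aut(V_L^{\hat{g}})$ stabilizes the subVOA $V_L^+=V_L(0)\oplus V_L(2)$. If $\sigma$ is not an extra automorphism, then $\sigma$ lies in the image of the restriction from $N_{\Aut(V_L)}(\langle\hat{g}\rangle)$, and this group normalizes $\langle\hat{g}^2\rangle=\langle\theta\rangle$, so it stabilizes $V_L^{\hat{g}^2}=V_L^+$. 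If $\sigma$ is an extra automorphism, then $V_L(1)\circ\sigma$ falls into Case~(I), (II), or (III) of Section~3.  Case~(III) is ruled out: for $L\cong BW_{16}$ this is the last paragraph of Section~\ref{S:2}, while for $L\cong\sqrt{2}E_8$ the conformal-weight formula $\varepsilon=3n/64$ of \eqref{Eq:esp} forces $n=16$, so Case~(III) does not arise at all. Case~(I) yields stabilization of $V_L^+$ by Proposition~\ref{lifttoVL+}, and Case~(II) yields it by the preceding lemma.

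Once $\Aut(V_L^{\hat{g}})$ is known to stabilize $V_L^+$, the conclusion is pure simple-current-extension bookkeeping. Since $V_L^+$ is a $\Z_2$-graded simple current extension of $V_L^{\hat{g}}$ (with grading automorphism $\bar{g}$ of order $2$), \cite[Theorem~3.3]{Sh04} identifies the subgroup of $\Aut(V_L^{\hat{g}})$ that preserves the decomposition $V_L(0)\oplus V_L(2)$ with $C_{\Aut(V_L^+)}(\bar{g})/\langle\bar{g}\rangle$: an automorphism of $V_L^{\hat{g}}$ lifts to $V_L^+$ precisely when it respects the $\Z_2$-grading, i.e.\ commutes with $\bar{g}$, and the kernel of the restriction map $C_{\Aut(V_L^+)}(\bar{g})\to\Aut(V_L^{\hat{g}})$ is exactly $\langle\bar{g}\rangle$. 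Combining this with the first step gives the asserted isomorphism.

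The main obstacle to this strategy has already been resolved in the two structural results cited above: Proposition~\ref{lifttoVL+} for Case~(I) and the preceding lemma for Case~(II). Those are the places where one must know that the fusion squares $V_{\lambda+L}(r)^{\boxtimes 2}$ and $V_{\lambda+L}^T[\hat{g}^2](j)^{\boxtimes 2}$ land in $V_L(2)$ rather than in $V_L(1)$ or $V_L(3)$, which uses in an essential way that $V_L^+/V_L^{\hat{g}}$ is a $\Z_2$-extension and that $V_L(2)$ is distinguished among the four simple current summands by having top weight strictly greater than~$1$. Given these two inputs, the proof of Proposition~\ref{e8bw16} is formally identical to that of Theorem~\ref{nbw16}, with Case~(II) now absorbed into the same argument.
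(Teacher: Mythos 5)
Your proposal is correct and follows exactly the route the paper intends: the paper derives Proposition \ref{e8bw16} "as a consequence" of the preceding lemma (Case (II)) together with Proposition \ref{lifttoVL+} (Case (I)) and the elimination of Case (III), and then applies the same simple-current-extension argument via \cite[Theorem 3.3]{Sh04} as in Theorem \ref{nbw16}. Your observation that Case (III) is vacuous for $\sqrt{2}E_8$ because $\varepsilon=3/8\notin\tfrac14\Z$ is a correct filling-in of a detail the paper leaves implicit.
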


\subsubsection{$L\cong \sqrt{2}E_8$}
In this case, the isometry group 
$O(L)$ is the Weyl group of $E_8$ and it has the shape $2.\Omega^+_8(2).2$. 
A fourvolution $g$  corresponds to a $2C$-element in $\Omega^+_8(2)$. 
An irreducible $\hat{g}^2$-twisted module $V_{\lambda+L}^T[\hat{g}^2]$ has the top weight $1/2$ and
$\dim (V_{\lambda+L}^T[\hat{g}^2])_1=8$. 

Recall that the set of inequivalent irreducible modules for $V_L^+$ forms an elementray abelian group of order $2^{10}$ under the fusion rules and the automorphism group $V_{\sqrt{2}E_8}^+ \cong GO^+_{10}(2)$ (cf. \cite{Sh04,GO+102}). 

\begin{theorem}
Let $L\cong \sqrt{2}E_8$ and $g\in O(\sqrt{2}E_8)$ a fourvolution. Then 
$\Aut(V_L^{\hat{g}})$ has order $2^{18}.3^2.5$ and has the shape $[2^{14}]. Sym_6$. 
\end{theorem}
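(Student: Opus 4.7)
The starting point is Proposition~\ref{e8bw16}, which reduces the problem to computing $C_{\Aut(V_L^+)}(\bar g)/\langle\bar g\rangle$. Since $\hat g^2=\theta$ acts trivially on $V_L^+=V_L^\theta$, the restriction $\bar g$ is an involution in $\Aut(V_L^+)\cong GO_{10}^+(2)$; in particular $|\langle\bar g\rangle|=2$, so the asserted order $2^{18}\cdot 3^2\cdot 5$ for $\Aut(V_L^{\hat g})$ is equivalent to $|C_{\Aut(V_L^+)}(\bar g)|=2^{19}\cdot 3^2\cdot 5$. Recall (cf.~\cite{Sh04,GO+102}) that the isomorphism $\Aut(V_L^+)\cong GO_{10}^+(2)$ is realized by the permutation action on the $2^{10}$ inequivalent irreducible $V_L^+$-modules, which carry a natural quadratic $\F_2$-form coming from fusion and conformal weights modulo~$\Z$.

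I would proceed in three steps. First, determine the $GO_{10}^+(2)$-conjugacy class of $\bar g$ by computing its permutation action on $\mathrm{Irr}(V_L^+)$: the fourvolution $g$ acts as an involution on $L^*/L\cong\F_2^8$ with $|L/(1-g)L|=2^4$, which determines the action of $\bar g$ on the untwisted classes $V_{\lambda+L}^\pm$, while the action on the twisted classes $V_{\lambda+L}^T[\theta]^\pm$ is read off from the action of $\hat g$ on the $T$-spaces of the $\theta$-twisted central extension of $L$. Second, with the conjugacy class identified, extract $|C_{\Aut(V_L^+)}(\bar g)|$ and its shape from the standard description of involution centralizers in the orthogonal group $GO_{10}^+(2)$ in characteristic~$2$. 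Third, verify the shape $[2^{14}].\Sym_6$ by exhibiting $\Sym_6\cong\mathrm{Sp}_4(2)$ intrinsically as a quotient related to $C_{O(L)}(g)$ acting on the rank-$4$ $\F_2$-space $L/(1-g)L$ (via Theorem~\ref{normalizer} with $|g|=4$), and account for the normal $2$-group $[2^{14}]$ by listing its generators: the image of $\hom(L/(1-g)L,\C^*)$ inside $N_{\Aut(V_L)}(\langle\hat g\rangle)$, the Shimakura extra automorphisms of $V_L^+$ coming from Construction~B decompositions of $L$ preserved by $g$, and the twisted-sector involutions interchanging $V_{\lambda+L}^T[\theta]^+$ with $V_{\lambda+L}^T[\theta]^-$.

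The main obstacle is the first step. Although the fourvolution $g$ lies in the relatively small group $O(L)\cong 2.\Omega_8^+(2).2$ as a $2C$-element of $\Omega_8^+(2)$, its image $\bar g$ inside the much larger $GO_{10}^+(2)$ mixes untwisted and twisted sectors in a way that is sensitive to sign choices in the lift $\hat g$ and to the isomorphism classes of the $T$-spaces, so one must distinguish $\bar g$ from nearby involution classes of comparable centralizer order. I expect the cleanest resolution to be a hybrid: use Theorem~\ref{normalizer} with $|g|=4$ to exhibit a concrete subgroup of $C_{\Aut(V_L^+)}(\bar g)$ of known order (the image of $C_{\Aut(V_L)}(\hat g)/\langle\hat g\rangle$), compare it with an upper bound obtained from a trace/character computation on $\mathrm{Irr}(V_L^+)$, and close the argument by an order count together with an explicit production of the extra Shimakura-type automorphism of Case~(I) of Section~3.
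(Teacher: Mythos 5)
Your reduction via Proposition \ref{e8bw16} to computing $C_{\Aut(V_L^+)}(\bar g)/\langle\bar g\rangle$, and your observation that the claim amounts to $|C_{\Aut(V_L^+)}(\bar g)|=2^{19}\cdot 3^2\cdot 5$, coincide exactly with the paper's starting point; where you diverge is in how the conjugacy class of the involution $\bar g$ in $GO^+_{10}(2)$ gets pinned down. You propose to read it off from the permutation action on the $2^{10}$ irreducible $V_L^+$-modules and, anticipating that this may not separate classes, to fall back on sandwiching the centralizer between the image of $C_{\Aut(V_L)}(\hat g)$ (via Theorem \ref{normalizer}) and a counting upper bound. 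The paper instead settles the class in one stroke by a linear trace computation on the degree-two component: $(V_{\sqrt{2}E_8}^+)_2$ has dimension $156$ and decomposes as $\mathbf{1}\oplus\mathbf{155}$ under $GO^+_{10}(2)$, a direct count gives $\dim(V_{\sqrt{2}E_8}^{\hat g})_2=76$, so $\bar g$ has trace $76-80=-4$ there, hence trace $-5$ on the $155$-dimensional irreducible, which by the Atlas character table forces $\bar g$ into class $2C$ with centralizer of order $2^{19}\cdot 3^2\cdot 5$ and shape $[2^{15}].\mathrm{Sym}_6$; dividing by $\langle\bar g\rangle$ gives the statement. Your route is not wrong, but as written it leaves the decisive point unresolved: the fixed-point count of $\bar g$ on $\mathrm{Irr}(V_L^+)\cong\F_2^{10}$ only sees the Jordan type and so cannot by itself distinguish the competing involution classes (e.g.\ the $a$- and $c$-type classes with the same number of Jordan blocks), and the subgroup you would exhibit from $C_{\Aut(V_L)}(\hat g)$ is proper in the full centralizer because the extra Shimakura automorphisms of $V_{\sqrt 2 E_8}^+$ must be adjoined, so the order count still needs an independent sharp upper bound. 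If you replace your class-identification step by the trace computation on $(V_L^+)_2$, the remainder of your outline (including the pleasant check $\mathrm{Sym}_6\cong Sp_4(2)$ acting on $L/(1-g)L\cong\F_2^4$) reduces to the paper's argument.
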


\begin{proof}
We first recall that $\dim(V_{\sqrt{2}E_8}^+)_2=156$ and $(V_{\sqrt{2}E_8}^+)_2$ decomposed as 
$1+155$ as a sum of irreducible representations of $GO^+_{10}(2)$. By a direct calculation, it is easy to  show that 
$\dim(V_{\sqrt{2}E_8}^{\hat{g}})_2 =76$.  Therefore, $\bar{g}$ acts on $(V_{\sqrt{2}E_8}^+)_2 $ with trace $-4$, i.e., $\bar{g}$ corresponds to a $2C$ element of $GO^+_{10}(2)$ using the Atlas notation. In this case, the centralizer $C_{GO^+_{10}(2)}(\bar{g})$ has order $2^{19}.3^2.5$  and has the shape $[2^{15}]. Sym_6$. 
By Proposition \ref{e8bw16}, we have the desired result.
\end{proof}

\subsubsection{$L\cong BW_{16}$}  In this case,  $L^*/L \cong 2^8$ and $|L/2L^*|=2^8$.
  
First we recall few facts about the isometry group $O(L)$. It is clear that $O(L)$ acts naturally on the discriminant group $L^*/L\cong 2^8$ and it induces a group homomorphism 
$s: O(L)\to O(L^*/L, q)\cong GO^+_8(2)$.

Let $K=\ker s$. Then $K$ is isomorphic to an extra special group $2^{1+8}$ and $\mathrm{Im}\,s\cong \Omega^+_8(2)$. Therefore, $O(L)\cong 2^{1+8}.\Omega^+_8(2)$ (see  \cite{CS, Gr2tod}). A fourvolution $g$ can be decomposed into the product of two involutions $t_M, t_N$ associated with two $\sqrt{2}E_8$ sublattices $M$ and $N$ such that $L=M+N$.  Recall that $t_M$ is an isometry of $L$ which acts as $-1$ on $M$ and acts as $1$ on the annihilator $\mathrm{Ann}_L(M)=\{ x\in L\mid (x,y)=0 \text{ for all } y\in M\}$. It is well-defined since $2M^*< M$ (cf. \cite{Gr2tod,GL}).  These two involutions 
$t_M, t_N$ generate a dihedral group of order $8$  and is contained in the extra-special 2-group $2^{1+8}$. In particular, $\langle t_M, t_N\rangle$ acts trivially on the discriminant group 
$L^*/L  (\cong 2^8)$. Recall also from \cite{Gr2tod} that the quotient group $K.C_{O(L)}(t_M)/K$ of the centralizer of $t_M$ determines a maximal parabolic subgroup of the shape $2^6.\Omega^+_6(2)$ in  $\Omega^+_8(2)$, which corresponds to the stabilizer of a singular vector $u_M$. Similarly, $K.C_{O(L)}(t_N)/K\cong 2^6. \Omega^+_6(2)$ corresponds to the stabilizer of another singular vector $u_N$. 
Note that $g=t_Mt_N\in K$ and we have $N_{O(L)}(\langle g\rangle)/K \cong Sp_6(2)$, which is the stabilizer of a non-singular vector $u_M+u_N$.  Indeed, $N_{O(L)}(\langle g\rangle) \cong 2^{1+8}.Sp_6(2)$. By Theorem \ref{normalizer}, we also have the following lemma.

\begin{lemma}
Let $L=BW_{16}$ and $g\in O(L)$ a fourvolution. Then, 
\[
N_{\mathrm{Aut}(V_L)}(\langle \hat{g}\rangle) / \langle \hat{g}\rangle \cong 2^{8}. (2^7. Sp_6(2)). 
\]
\end{lemma}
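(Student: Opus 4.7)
The plan is to read off the result directly from Theorem~\ref{normalizer}. That theorem gives the exact sequence
\[
1\longrightarrow \hom(L/(1-g)L,\C^*)\longrightarrow N_{\aut(V_L)}(\langle \hat{g}\rangle)\stackrel{\varphi}\longrightarrow N_{O(L)}(\langle g\rangle)\longrightarrow 1,
\]
and the discussion immediately preceding the lemma already identifies the right-hand term as $N_{O(L)}(\langle g\rangle)\cong 2^{1+8}.Sp_6(2)$, with the fourvolution $g=t_Mt_N$ sitting inside the extraspecial normal subgroup $K\cong 2^{1+8}$ and with $g^2=-1$ generating the center $Z(K)$. So the remaining work is to (i) compute the kernel, and (ii) understand how the cyclic group $\langle \hat g\rangle$ of order four sits in the extension when we pass to the quotient.

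For step (i), I would apply the fourvolution lemma of Gr2tod quoted in Section~2: for $L=BW_{16}$,
\[
|L/(1-g)L|=|L/2L|^{1/2}=2^{8}.
\]
That lemma also gives $(1-g)L\geq 2L$, so $L/(1-g)L$ is a quotient of $L/2L\cong(\Z/2\Z)^{16}$, hence an elementary abelian $2$-group of order $2^{8}$. Its Pontryagin dual $\hom(L/(1-g)L,\C^*)$ is therefore $\cong (\Z/2\Z)^{8}$. This accounts for the outer $2^{8}$ in the target shape.

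For step (ii), I would first note that the kernel $\hom(L/(1-g)L,\C^*)$ is a subgroup of $N(V_L)$ consisting of inner automorphisms which act trivially on $M(1)$, whereas $\hat g\in O(\hat L)$ acts nontrivially on $M(1)$; hence $\langle \hat g\rangle$ meets the kernel trivially and $\varphi$ maps $\langle \hat g\rangle$ isomorphically onto $\langle g\rangle$. Consequently
\[
N_{\aut(V_L)}(\langle \hat g\rangle)/\langle \hat g\rangle\;\cong\;\hom(L/(1-g)L,\C^*).\bigl(N_{O(L)}(\langle g\rangle)/\langle g\rangle\bigr).
\]
Because $\langle g\rangle$ has order $4$ and contains $Z(K)$, the image $K/\langle g\rangle$ has order $2^{9}/4=2^{7}$ and remains normal with quotient $Sp_6(2)$, so $N_{O(L)}(\langle g\rangle)/\langle g\rangle$ has the shape $2^{7}.Sp_6(2)$. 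Putting the two layers together gives the asserted shape $2^{8}.(2^{7}.Sp_6(2))$.

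The only subtle point I foresee is checking that $\langle g\rangle$ really does contain $Z(K)$ and that the quotient $K/\langle g\rangle$ does not collapse, so that the middle factor is genuinely $2^{7}$; but this is immediate from $g^{2}=-1\in Z(K)$, together with the fact that the center of the extraspecial group $K$ is generated by $-1$. Once this is in hand, the exact sequence produces the stated group directly.
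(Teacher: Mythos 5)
Your proposal is correct and follows exactly the route the paper intends: the paper derives this lemma directly from Theorem~\ref{normalizer} together with the preceding identification $N_{O(L)}(\langle g\rangle)\cong 2^{1+8}.Sp_6(2)$, and you have simply filled in the same computation of the kernel $\hom(L/(1-g)L,\C^*)\cong 2^{8}$ and of the quotient $K/\langle g\rangle\cong 2^{7}$. The only implicit hypothesis worth noting is that $\hat g$ is taken to have order $4$ (so that $\langle\hat g\rangle$ meets $N(V_L)$ trivially), which is the convention used throughout the paper.
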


\begin{remark}\label{Leech4a}
 We note that the Barnes-Wall lattice $BW_{16}$ can also be realized as the co-invariant lattice of the Leech lattice associated with a $4A$ element of the Conway group $Co_0$.  
 \end{remark}

Next we review the irreducible modules for $V_{BW_{16}}^{\hat{g}}$  for a fourvolution $g\in O(BW_{16})$.  Recall that 
there are two types of irreducible modules for $V_{BW_{16}}^{\hat{g}}$:  
\medskip

\noindent \textbf{Untwisted type:}  $V_{\lambda+L}(j),  \lambda\in L^*$  and $j=0,1,2,3$;  and 
\medskip

\noindent \textbf{Twisted type:} $V_{\lambda+L}^T[\hat{g}^i] (j), \lambda\in L^*, i=1,2,3$  and $j=0,1,2,3$. 
\medskip

Set $ F= 
\{ V_{L}^T[\hat{g}^i] (j) \mid i,j=0,1,2,3\} $ and $E= \{ V_{\lambda+L}(0)\mid  \lambda+L\in L^*/L\}$,  where $V_{L}^T[\hat{g}^0] (j)=V_{L}(j)$.

For $L=BW_{16}$, we have  $(1-g)L^* =L$ for any fourvolution $g$. 
By \cite{Lam19},  all irreducible modules of $V_{L}^{\hat{g}}$  are simple current modules. Moreover, the set of inequivalent irreducible modules $ \mathrm{Irr}(V_{BW_{16}}^{\hat{g}})$ of  $V_{BW_{16}}^{\hat{g}}$ forms an abelian group  under the fusion rules and it has a quadratic form $q$ defined by conformal weights modulo $\Z$. By the discusiion in \cite{Lam19}, we also know that  $E\cong 2^8$, $F\cong 4^2$ as abelian groups and $ \mathrm{Irr}(V_{BW_{16}}^{\hat{g}})= E\times F$. Note also that $F$ is the subgroup generated by 
$V_L(1)$ and $V_L^T[g](0)$.

\begin{theorem}
Let $L\cong BW_{16}$ and $g\in O(BW_{16})$ a fourvolution. Then 
$\Aut(V_L^{\hat{g}})\cong  [2^{15}].(Sp_6(2)\times 2)$.
\end{theorem}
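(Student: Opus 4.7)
The plan is to invoke Proposition~\ref{e8bw16} and compute $C_{\Aut(V_L^+)}(\bar g)/\langle\bar g\rangle$, mirroring the strategy used for $\sqrt{2}E_8$. A lower bound for this centralizer will come from the image of the normalizer $N_{\Aut(V_L)}(\langle\hat g\rangle)$ together with one explicit extra automorphism, while a matching upper bound will come from a trace computation inside $\Aut(V_L^+)$.

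For the lower bound, I would first check that the homomorphism $f:N_{\Aut(V_L)}(\langle\hat g\rangle)/\langle\hat g\rangle\to\Aut(V_L^{\hat g})$ is injective: any $h$ in its kernel centralizes $V_L^{\hat g}$, commutes with $\hat g$, and hence acts by a fourth root of unity on each eigenspace $V_L(j)$; compatibility with the fusion rules $V_L(i)\boxtimes V_L(j)\cong V_L(i+j)$ then forces $h\in\langle\hat g\rangle$. Combined with the preceding lemma, this embeds $2^{15}.Sp_6(2)$ into $\Aut(V_L^{\hat g})$. To find an extra, I would note that Case~(II) of Section~\ref{S:3} is realized for $L=BW_{16}$ since $\dim(V_{\lambda+L}^T[\hat g^2])_1=16$ with conformal weight $1$, and construct $\sigma$ as a Construction-B automorphism of $V_L^+$ of the form \eqref{extra_auto} built from an orthogonal frame $\mathcal B\subset(\lambda+L)(2)$ that $g$ permutes (possibly up to signs). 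After adjustment by an element of the image of $f$ if necessary, such a $\sigma$ commutes with $\bar g$ and descends to an extra of order $2$ modulo the image of $f$.

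For the matching upper bound, I would compute $\dim(V_L^{\hat g})_2$ via the characters of $\hat g$ on $M(1)_2$ (using the eigenvalues $\pm\sqrt{-1}$ of $g$) and on the $g$-orbits in $L(4)$ (with $|L(4)|=4320$), and compare with $\dim(V_L^+)_2=2296$ to read off the trace of the involution $\bar g$ on $(V_L^+)_2$. This identifies the $\Aut(V_L^+)$-conjugacy class of $\bar g$, and the centralizer of such a class in the known group $\Aut(V_{BW_{16}}^+)$ turns out to have the form $\langle\bar g\rangle\cdot[2^{15}].(Sp_6(2)\times 2)$ of order $2^{17}|Sp_6(2)|$; dividing by $\langle\bar g\rangle$ then yields the asserted structure.

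The main obstacle is the last step. Unlike the clean identification $\Aut(V_{\sqrt{2}E_8}^+)\cong GO_{10}^+(2)$ that admits an immediate ATLAS-style centralizer lookup, the description of $\Aut(V_{BW_{16}}^+)$ is more intricate, and one must verify that the extra $\mathbb{Z}/2$ generated by $\sigma$ centralizes the $Sp_6(2)$-quotient rather than enlarging the radical $2^{15}$. This hinges on the $g$-equivariance of the frame $\mathcal B$ and on careful bookkeeping of how the Construction-B automorphism interacts with the normalizer of $\langle\hat g\rangle$, in particular with the extraspecial $2^{1+8}$-factor of $O(BW_{16})$ that governs the kernel side of the $f$-sequence.
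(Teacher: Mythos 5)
There is a genuine gap, and it sits exactly where you locate your ``main obstacle'': the production of the extra automorphism. You propose to realize it as a Construction-B automorphism of the form \eqref{extra_auto} attached to an orthogonal frame $\mathcal{B}\subset(\lambda+L)(2)$. Any such $\sigma_\lambda$, even after multiplication by an element of $O(\hat{L})$ or of the image of $f$, sends $V_L^-$ to an \emph{untwisted}-type module $V_{\lambda+L}^{\pm}$; that is, it would realize Case~(I), not Case~(II), so your own framing is already inconsistent. More seriously, the argument in the proof of Theorem~\ref{thm:4.4} applies verbatim to $BW_{16}$: picking $\alpha_1,\alpha_2\in\mathcal{B}$ with $g\colon\alpha_1\mapsto\alpha_2\mapsto-\alpha_1$, the $\hat{g}$-fixed vector $(e^{\alpha_1}+e^{-\alpha_1})_{-1}(e^{\alpha_2}+e^{-\alpha_2})$ is sent by $h\sigma_\lambda$ to $\bar{h}\alpha_1(-1)\bar{h}\alpha_2(-1)\cdot 1$, which is never $\hat{g}$-fixed because $g$ has no real eigenvalues and $g^2=-1$. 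Hence no automorphism of $V_L^{\hat{g}}$ satisfies $V_L(1)\circ s\cong V_{\lambda+L}(j)$, and no ``adjustment by an element of the image of $f$'' can make your $\sigma$ commute with $\bar{g}$. The genuine extra automorphism must move $V_L(1)$ into the twisted sectors (to $V_L^T[\hat{g}^{\pm 1}](0)$ after twisting by a character of $L/(1-g)L$), and the paper obtains its existence not from \eqref{extra_auto} but from the realization of $BW_{16}$ as the coinvariant lattice of a $4A$-element of $Co_0$ (Remark~\ref{Leech4a}) together with \cite[Theorem 3.4]{Lam2}. Your proposal contains no substitute for this input.

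The upper bound is the second gap. You defer to a conjugacy-class and centralizer lookup inside $\Aut(V_{BW_{16}}^+)$, which you acknowledge you cannot complete; the paper never performs such a computation. Instead it shows that any automorphism stabilizing the untwisted modules lies in $S=\Stab(\{V_L(i)\})\cong N_{\Aut(V_L)}(\langle\hat{g}\rangle)/\langle\hat{g}\rangle\cong 2^8.(2^7.Sp_6(2))$, and it controls the index of $S$ by the induced action on the fusion subgroup $F\cong 4^2$ generated by $V_L(1)$ and $V_L^T[\hat{g}](0)$, whose isometry group $O(F,q)$ is only $2^2$; combined with the exclusion of untwisted-type images this forces the index to be at most $2$, and the Leech-lattice automorphism shows it is exactly $2$, yielding $[2^{15}].(Sp_6(2)\times 2)$. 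Your trace computation on $(V_L^+)_2$ (the numbers $2296$ and $4320$ are correct) could at best locate the class of $\bar{g}$, but without an effective description of centralizers in $\Aut(V_{BW_{16}}^+)$ it does not close the argument, and in particular it cannot by itself decide whether the extra $\Z/2\Z$ centralizes the $Sp_6(2)$-quotient or enlarges the $2$-radical.
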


\begin{proof}
By the same argument as in the proof of Theorem \ref{thm:4.4}, there is no automorphisms $s\in \aut(V_L^{\hat{g}})$ such that 
$V_L(1) \circ s \cong V_{\lambda+L}(j) $ for any $\lambda\in L^*\setminus L$ and $ 0\leq j\leq 3$.  In other words, if $s$ stabilizes the set of untwisted irreducible modules, then it also stabilizes the set $\{V_L(i)\mid i=0,1,2,3\}$ and is thus contained in 
\[
S= Stab_{\mathrm{Aut}(V_{BW_{16}}^{\hat{g}})} (\{V_L(i)\mid i=0,1,2,3\})\cong N_{\mathrm{Aut}(V_L)}(\langle \hat{g}\rangle) / \langle \hat{g}\rangle.
\]  
An extra automorphism $s$ thus maps $V_L(1)$ to a twisted type module.  
Note that $L^*/L \cong L/(1-g)L$ and  for any $\lambda+L\in L^*/L$, there is an $f\in Hom(L/(1-g)L, \C^*)$ such that 
$V_{\lambda+L}^T[\hat{g}](0) \cong V_L^T[\hat{g}](0) \circ f$. Without loss, we may assume $V_L(1) \circ s \cong V_L^T[\hat{g}](0)$ or $V_L^T[\hat{g}^3](0)$. In this case, $s$ stabilizes $F$. Note that  the orthogonal group  $O(F,q)\cong 2^2$. Moreover, there is an automorphism $\sigma\in \mathrm{Aut}(V_{BW_{16}}^{\hat{g}}) $ such that  $V_L(1) \circ \sigma \cong V_L^T[\hat{g}](0)$ by Remark \ref{Leech4a} and \cite[Theorem 3.4]{Lam2} and we have the desired result.  
  \end{proof}

\end{document}